\newtheorem{theorem}{Theorem}[section]
\newtheorem{lemma}[theorem]{Lemma}
\newtheorem{proposition}[theorem]{Proposition}
\newtheorem{claim}{Claim}
\newtheorem{corollary}[theorem]{Corollary}
\newtheorem*{thmA}{Theorem~\ref{thm:spectrum-homology-sphere_bound}}
\theoremstyle{definition}
\newtheorem{definition}[theorem]{Definition}
\theoremstyle{remark}
\newtheorem{remark}[theorem]{Remark}
\numberwithin{equation}{section}
\def\S{\Sigma}
\def\D{\mathcal{D}}
\def\Z{\mathbb{Z}}
\def\NN{\mathbb{N}}
\def\NI{\mathbb{N}\cup\{\infty\}}
\def\TT{\mathscr{T}}
\def\H{\mathscr{H}}
\begin{document}

\title{The Triple Point Spectrum of closed orientable $3$-manifolds}

\author{\'Alvaro Lozano Rojo}
\address{Centro Universitario de la Defensa Zaragoza, Academia General Militar
Carretera de Huesca s/n. 50090 Zaragoza, Spain --- IUMA, Universidad de Zaragoza}
\email{alvarolozano@unizar.es}

\thanks{Partially supported by the European Social Fund and Diputaci\'on General de Arag\'on (Grant E15 Geometr{\'\i}a).}

\author{Rub\'en Vigara Benito}
\address{Centro Universitario de la Defensa Zaragoza, Academia General Militar
Carretera de Huesca s/n. 50090 Zaragoza, Spain --- IUMA, Universidad de Zaragoza}
\email{rvigara@unizar.es}

\subjclass[2000]{Primary 57N10, 57N35}


\keywords{$3$-manifold, homology $3$-sphere, immersed surface, filling Dehn surface, triple points, complexity of $3$-manifolds}

\begin{abstract}
  The triple point numbers and the triple point spectrum of a closed 
  $3$-manifold were defined in~\cite{tesis}.
  They are topological invariants that give a measure of the complexity of a 
  $3$-manifold
  using the number of triple points of minimal filling Dehn surfaces. 
  Basic 
  properties of these invariants are presented, and the triple point spectra of 
  $\mathbb{S}^2\times \mathbb{S}^1$ and $\mathbb{S}^3$ are computed.
\end{abstract}

\maketitle

\section{Introduction}\label{sec:intro}

Through the whole paper all 3-manifolds and surfaces are assumed to be 
orientable and closed, that is, compact connected and without boundary, $M$ 
denotes a 3-manifold and $S$ a surface. All objects are assumed to be in the 
smooth category: manifolds have a differentiable structure and all maps are 
assumed to be smooth. By simplicity a genus $g$ surface, $g=0,1,2,\ldots$, is 
called a \emph{$g$-torus}.

Let $M$ be a 3-manifold.
  
A subset $\S\subset M$ is a \emph{Dehn surface} in $M$~\cite{Papa} if there 
exists a surface $S$ and a general position immersion $f:S\rightarrow M$ such 
that $\Sigma=f\left( S\right)$. In this situation we say that $S$ is the 
\emph{domain} of $\S$ and that $f$ \emph{parametrizes} $\Sigma$. If $S$ is a 
$2$-sphere, a torus, or a $g$-torus then $\Sigma$ is a \emph{Dehn sphere}, a 
\emph{Dehn torus}, or a \emph{Dehn $g$-torus} respectively. For a Dehn surface 
$\Sigma\subset M$, its singularities are divided into \emph{double points}, 
where two sheets of $\Sigma$ intersect transversely, and \emph{triple points}, 
where three sheets of $\S$ intersect transversely, and they are arranged along 
\emph{double curves} (see Section~\ref{sec:Dehn-surfaces-Johansson-diagrams} 
below for definitions and pictures).
  
  A Dehn surface $\S\subset M$ \emph{fills} $M$~\cite{Montesinos} if it defines a cell-decomposition of $M$ in which the 0-skeleton is the set of triple points of $\Sigma$, the 1-skeleton is the set of double and triple points of $\Sigma$, and the 2-skeleton is $\Sigma$ itself. Filling Dehn spheres are introduced (following ideas of W. Haken~\cite{Haken1}) in~\cite{Montesinos}, where it is proved that every closed, orientable 3-manifold has a filling Dehn sphere. In ~\cite{Montesinos} filling Dehn spheres and their Johansson diagrams (cf. Section~\ref{sec:Dehn-surfaces-Johansson-diagrams}) are proposed as a suitable way for representing all closed orientable 3-manifolds. A weaker version of filling Dehn surfaces are the \emph{quasi-filling Dehn surfaces} defined in~\cite{Amendola02}. A quasi-filling Dehn surface in $M$ is a Dehn surface whose complementary set in $M$ is a disjoint union of open $3$-balls. In~\cite{FennRourke} it is proved that every $3$-manifold has a quasi-filling Dehn sphere. As it is 
pointed out in~\cite{Amendola02} (see also~\cite{Funar}), filling Dehn surfaces are closely related with \emph{cubulations} of 3-manifolds: cell decompositions where the building blocks are cubes. The dual decomposition of that defined by a filling Dehn surface is a cubulation and vice versa, the dual cell decomposition of a cubulation is a filling Dehn surface (perhaps with nonconnected or nonorientable domain).
   \begin{figure}
    \centering
    \includegraphics[width=0.4\textwidth]{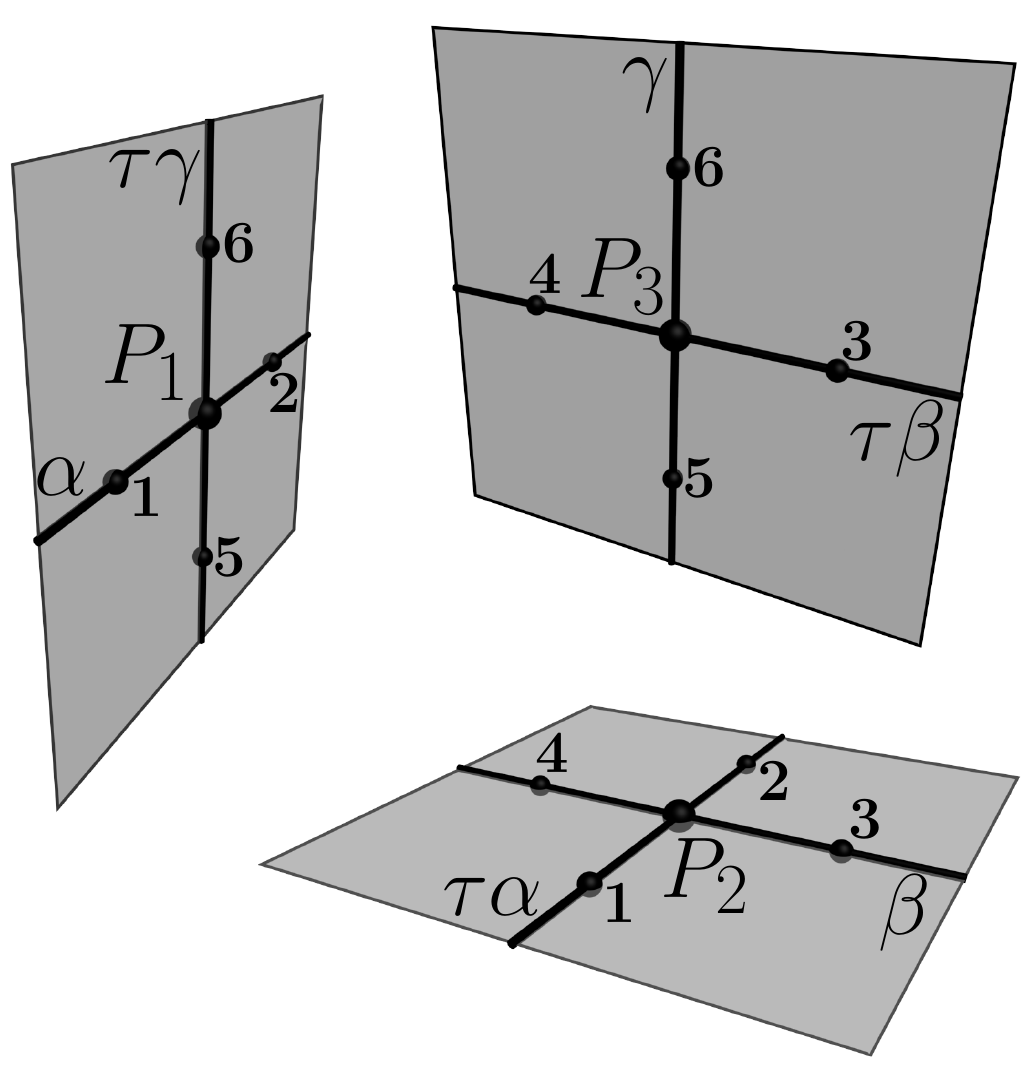}
    \includegraphics[width=0.1\textwidth]{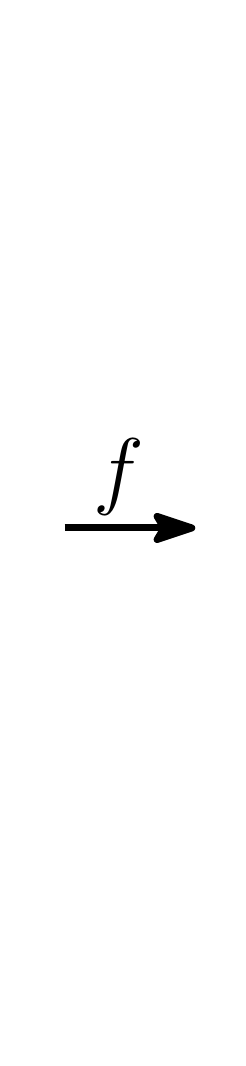}
    \includegraphics[width=0.3\textwidth]{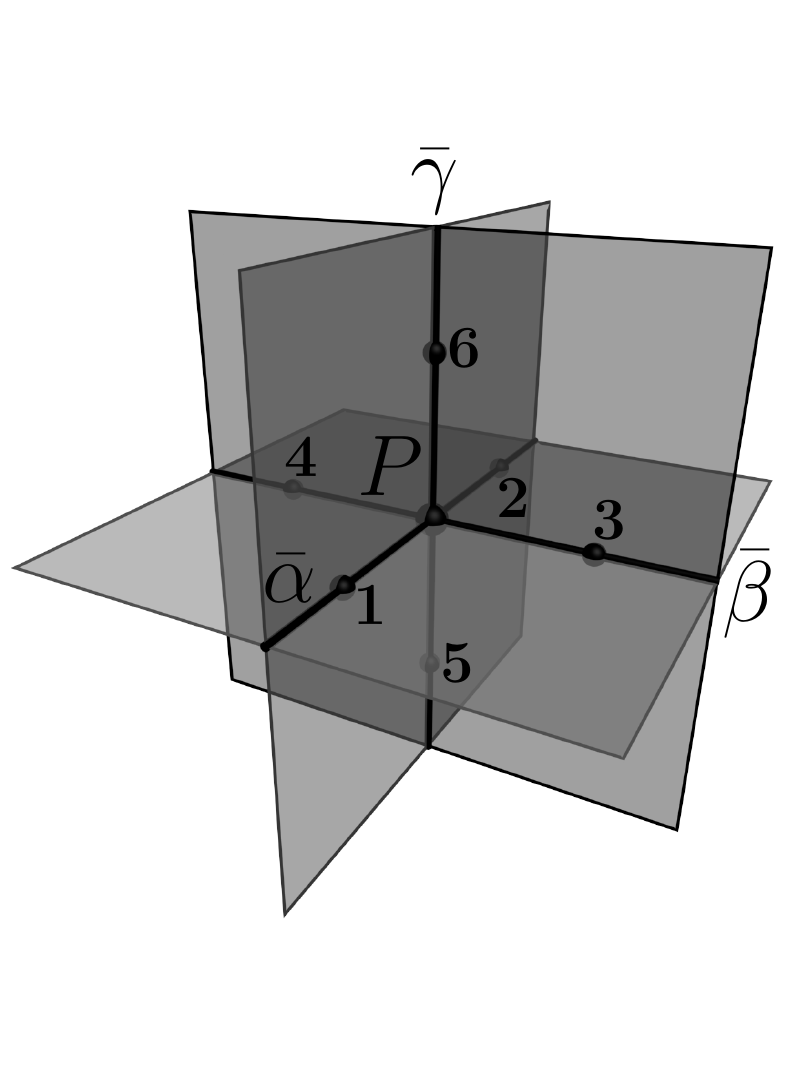}
    \caption{A triple point of $\S$ and its triplet
    in $S$}\label{fig:triple-point-00}
  \end{figure}
The number of triple points of filling Dehn surfaces measures the complexity of 
$3$-manifolds. A filling Dehn surface $\S\subset M$ with domain $S$ is 
\emph{minimal} if there is no filling Dehn surface in $M$ with domain $S$ with 
less triple points than $\S$. 

In~\cite{RHomotopies} it is defined the \emph{genus $g$ triple point number} 
$t_g(M)$ of $M$ as the number of triple points of a minimal filling $g$-torus 
of $M$, and the \emph{triple point spectrum} of $M$ as the sequence of numbers 
\[
  \mathscr{T}(M)=(t_0(M),t_1(M),t_2(M),\ldots)\,.
\]
The genus $0$ triple point number of $M$ is the \emph{Montesinos complexity} 
$M$~\cite{racsam}. These invariants are related to the 
\emph{surface-complexity} of $M$, defined in~\cite{Amendola02} as the number of 
triple points of a quasi-filling Dehn surface of $M$ (perhaps with nonconnected 
or nonorientable domain).
  
We want to clarify how much information about $M$ encloses $\TT(M)$. Since the 
original work~\cite{RHomotopies}, nothing is known about the triple point 
numbers and the triple point spectrum apart from their definitions. In this 
paper we introduce some properties of $\TT(M)$ 
(Section~\ref{sec:spectrum-surgerysurfaces}), and we present the two unique 
known triple point spectra
  \begin{align}
  \TT(\mathbb{S}^2\times \mathbb{S}^1)&=(2,1,3,5,\ldots)\,,\label{eq:triple-point-spectrum-S2-S1}\\
  \TT(\mathbb{S}^3)&=(2,4,6,8,\ldots)\,.\label{eq:triple-point-spectrum-S3}
  \end{align}
  
  The proof of \eqref{eq:triple-point-spectrum-S2-S1} relies on two previously known examples of minimal filling Dehn surfaces of $\mathbb{S}^2\times \mathbb{S}^1$, together with the inequality (Lemma~\ref{lem:triple-point-inequality})
  \begin{equation}
      t_{g+1}(M)\leq t_{g}(M)+2\,,\; g=0,1,2,\ldots ,\label{eq:consecutive-triple-point-numbers-2}
  \end{equation}
  that arises from a simple surgery operation (the \emph{handle piping}, see Figure~\ref{Fig:handle}). This will be detailed in Section~\ref{sec:spectrum-surgerysurfaces}.
  
  The characterization of the triple point spectrum of $\mathbb{S}^3$ needs 
  more work.
  The identity \eqref{eq:triple-point-spectrum-S3} is proposed in~\cite{racsam} 
  as an open question. 
  Using that $t_0(\mathbb{S}^3)=2$~\cite{racsam}, the handle piping provides $g$ filling
  Dehn $g$-tori in $\mathbb{S}^3$ with $2+2g$ triple points for all $g=1,2,\ldots$.
  The harder part is to prove that all these filling Dehn surfaces in $\mathbb{S}^3$ are minimal.
  To do so, in Section~\ref{sec:fundamentalgroup} we introduce a presentation of the 
  fundamental group a Dehn $g$-torus. This is a generalization of a presentation of 
  the fundamental group of Dehn spheres due to W. Haken and detailed in~\cite{tesis}. 
  Using it, in Section~\ref{sec:checkers-homology-spheres} we prove:

\begin{thmA}
  If $M$ is a $\Z/2$-homology $3$-sphere
  \begin{equation*}
    t_g(M) \geq 2 + 2g.
  \end{equation*}
\end{thmA}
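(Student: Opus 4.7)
Let $\Sigma\subset M$ be a filling Dehn $g$-torus with $t$ triple points. The plan is to combine Euler-characteristic bookkeeping for the cell decomposition induced by $\Sigma$ with the checker $2$-colouring of regions that exists because $H_2(M;\Z/2)=0$, and then to invoke the presentation of $\pi_1(\Sigma)$ from Section~\ref{sec:fundamentalgroup} to close the extremal case.

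First I would record the cell counts. Six arcs of double curves meet at each triple point, so there are $3t$ edges; pulling back to the domain $S_g$ and using $\chi(S_g)=2-2g$ gives $f=2-2g+3t$ faces; and $\chi(M)=0$ then yields $R=t+2-2g$ regions, each a $3$-ball. Since $H_2(M;\Z/2)=0$, the mod-$2$ fundamental class of $\Sigma$ is a boundary in $C_2(M;\Z/2)$; writing it as $\partial(\sum_{r\in\mathcal{B}}r)$ partitions the regions into black and white classes $\mathcal{B}$ and $\mathcal{W}$ in such a way that every face is bordered by one region of each colour.

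Next I would derive $|\mathcal{B}|=|\mathcal{W}|=(t+2-2g)/2$. For each black region $r$, the boundary $\partial r$ is a cellular $2$-sphere whose $1$-skeleton is $3$-valent (three double-curve arcs meet at every triple-point corner of $r$), so $\chi(\partial r)=2$ forces $f_r = 2 + v_r/2$. Summing over $r\in\mathcal{B}$ with $\sum v_r = 4t$ (four black octants per triple point), $\sum e_r = 6t$ (two black quadrants per double-curve edge), and $\sum f_r = f$ (one black side per face), yields $2|\mathcal{B}| = f - 2t = t + 2 - 2g$. In particular $t$ must be even and $t\geq 2g$.

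It remains to rule out the borderline case $t = 2g$, in which $|\mathcal{B}|=|\mathcal{W}|=1$ and the two $3$-balls have boundary spheres that map onto $\Sigma$ as cellular surjections that are $1$-to-$1$ on faces, $2$-to-$1$ on edges and $4$-to-$1$ on triple points. I would feed this rigid combinatorics into the Haken-type presentation of $\pi_1(\Sigma)\cong\pi_1(M)$ from Section~\ref{sec:fundamentalgroup}, abelianise modulo $2$, and show that the face relators carry one more $\Z/2$-linear dependency than the cell count anticipates; the relation matrix would then fail to kill all $2t+1$ generators coming from the $1$-skeleton, contradicting $H_1(M;\Z/2)=0$. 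Parity then forces $t\geq 2g+2$. The main obstacle is exactly this final step: one has to turn ``a unique region of each colour'' into an explicit extra dependency in the presentation. I expect each face relator, once abelianised, to be determined by the two opposite-colour octant-corners on either side, so that having only one black and one white region makes these abelianisations collapse further than Euler-characteristic bookkeeping predicts, leaving at least one $1$-skeleton generator uncancelled.
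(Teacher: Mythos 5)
Your cell-counting and the two-colouring of the regions are correct, and they give a cleaner route to the easy half of the bound than the paper does: deriving the checkerboard directly from $H_2(M;\Z/2)=0$ by writing $[\Sigma]=\partial\bigl(\sum_{r\in\mathcal{B}}r\bigr)$, then counting black octants at triple points to get $2|\mathcal{B}|=t+2-2g$, packages both the parity of $t$ and the inequality $t\geq 2g$ into one integrality statement. The paper instead argues in two steps, via Remark~\ref{rmk:at-least-1+2g-triple-points} ($t\geq 2g-1$ from region count) and Proposition~\ref{prop:checkered-even} (a neighbouring-curve parity argument), so on this easy half you have found a genuinely streamlined alternative.

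The gap is in the extremal case $t=2g$, which you explicitly flag as ``the main obstacle,'' and it is not a minor one: this is where essentially all the work of the theorem lives. Your proposed mechanism does not match the presentation from Section~\ref{sec:fundamentalgroup} that you intend to invoke. You speak of ``$2t+1$ generators coming from the $1$-skeleton'' and of ``face relators,'' but $\pi(\D)$ has generators $s_1,\ldots,s_{2g}$ (a basis of $\pi_1(S)$) and one preferred dual loop per pair of sister curves, and its relators are the triple-point relations~(R2) and the double-curve relations~(R3), not relators attached to faces of $\Sigma$; so the count $2t+1$ does not correspond to anything in that presentation. Moreover the fact that $|\mathcal{B}|=|\mathcal{W}|=1$ is not directly what feeds the argument. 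What the paper actually uses is the induced checkering of the domain surface $S$ by the diagram $\D$ (Lemma~\ref{lem:orientable-checkered-surface-checkered-diagram}): after abelianising mod $2$, one writes each AR3 relator $\alpha=\tau\alpha$ as $\sum_i q^i_\alpha s_i=0$, observes that the column sum $\sum_\alpha q^i_\alpha$ computes (via the intersection pairing on $H_1(S;\Z/2)$) the intersection number of a surface generator with the whole diagram $\D$, and concludes that each column sum vanishes mod $2$ because $S$ is checkered by $\D$. This exhibits one explicit linear dependency among the $k$ double-curve relators, so $H_1(\Sigma;\Z/2)$ has $2g+k$ generators and at most $2g+k-1$ independent relations, hence is nontrivial, contradicting $H_1(\Sigma;\Z/2)\cong H_1(M;\Z/2)=0$ from Remark~\ref{rmk:homology-M-Sigma}. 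Your proposal correctly guesses that there should be one extra $\Z/2$-linear dependency and that the homology presentation is the tool, but the concrete source of that dependency---intersection numbers of the $s_i$ with $\D$ forced even by the checkering of $S$---is the idea you are missing, and without it the proof does not close.
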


  This theorem completes the proof of \eqref{eq:triple-point-spectrum-S3}.
  \begin{figure}
    \centering
    \includegraphics[width=0.6\textwidth]
    {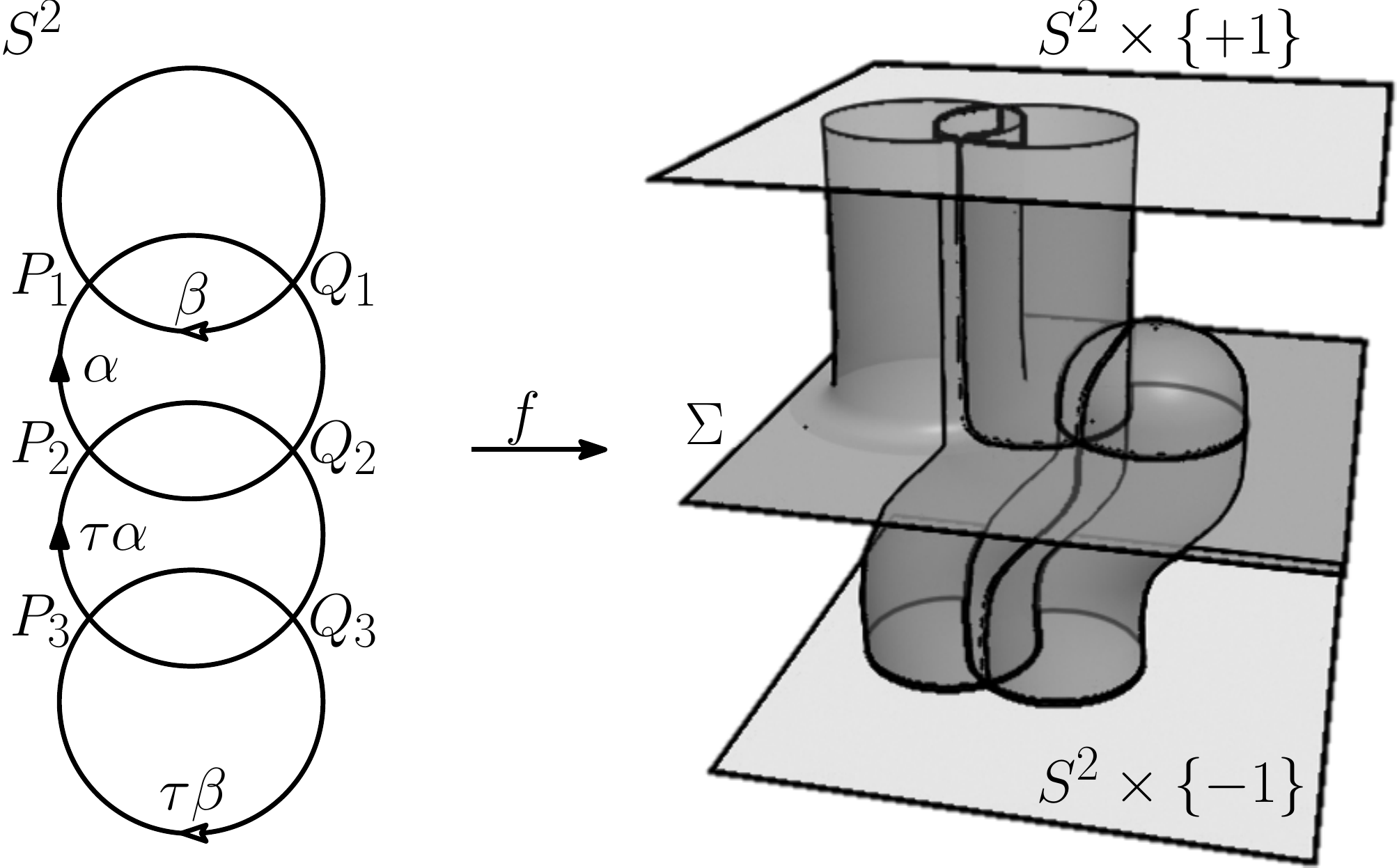}
    \caption{A filling Dehn sphere in $\mathbb{S}^2\times \mathbb{S}^1$}
    \label{Fig:audi}
  \end{figure}

\section{Dehn surfaces and their Johansson's diagrams}\label{sec:Dehn-surfaces-Johansson-diagrams}

  Let $\S$ be a Dehn surface in $M$ and consider a parametrization $f:S\to M$ 
  of $\S$.
  
  The \emph{singularities} of $\S$ are the points $x\in\S$ such that 
  $\#f^{-1}(x)>1$. The \emph{singularity set} $S(\S)$ of $\S$ is the set of 
  singularities of $\S$. As $f$ is in general position, the singularities of 
  $\S$ can be 
  divided into double points ($\#f^{-1}(x)=2$),  and triple points 
  ($\#f^{-1}(x)=3$). Following~\cite{Shima}, we denote by $T(\Sigma)$ the set 
  of triple points of $\Sigma$. The preimage under $f$ in $S$ of the 
  singularity set of $\Sigma$, together with the information about how its 
  points become identified by $f$ in $\S$ is the \emph{Johansson diagram} $\D$ 
  of $\S$ (see~\cite{Johansson1,Montesinos}).  We say that two points of $S$ 
  are \emph{related} if they project onto the same point of $\S$.
    
  In the following a \emph{curve} in $S$, $\S$ or $M$ is the image of an immersion 
  (a \emph{parametrization} of the curve) from $\mathbb{S}^1$ or $\mathbb{R}$ 
  into $S$, $\S$ or $M$, respectively. A \emph{path} in $S$, $\S$ or $M$  is a map $\eta$
  from the interval $[0,1]$ into $S$, $\S$ or $M$, respectively. We say that $\eta$ \emph{joins} 
  $\eta (0)$ \emph{with} $\eta (1)$, or that $\eta$ \emph{starts} at 
  $\eta (0)$ and \emph{ends} at $\eta (1)$, and if $\eta(0)=\eta(1)$ we say that $\eta$
  is a \emph{loop}.
  A double curve of $\S$ is a curve in $M$ contained in $S(\S)$.
  
  Because $S$ is closed, double curves are closed and there is a finite number of them. The number of triple points is also finite. Because $S$ and $M$ are orientable, the preimage under $f$ of a double curve of $\S$ is the union of two different closed curves in $S$, and we will say that these two curves are \emph{sister curves} of $\D$. Thus, the Johansson diagram of $\S$ is composed by an even number of different closed curves in $S$, and we will  identify $\D$ with  the  set of different  curves that compose it. For any curve $\alpha\in\D$ we denote by $\tau \alpha$ the sister curve of $\alpha$ in $\D$. This defines a free involution $\tau:\D\rightarrow\D$, the \emph{sistering} of $\D$, that sends each curve of $\D$ into its sister curve of $\D$.
  
  The curves of $\D$ intersect with others or with themselves transversely at the \emph{crossings} of $\D$. The crossings of $\D$ are the preimage under $f$ of the triple points of $\S$. If $P$ is a triple point of $\S$, the three crossings of $\D$ in $f^{-1}(D)$ compose \emph{the triplet of} $P$  (see Figure~\ref{fig:triple-point-00}). In Section~\ref{sec:fundamentalgroup} we will consider paths contained in curves of $\D$ or in double curves. In this special case, we will consider that paths are also immersed, and therefore they continue ``straight ahead'' when they arrive to a crossing.
  
    If $\Sigma$ is a Dehn surface in $M$, a connected component of $M-\Sigma$ is a \emph{region} of $\Sigma$, a connected component of $\Sigma-S(\Sigma)$ is a \emph{face} of $\Sigma$, and a connected component of $S(\S)-T(\S)$ is an \emph{edge} of $\S$. The Dehn surface $\S$ fills $M$ if and only if all its edges, faces and regions are open $1$, $2$ or $3$-dimensional disks respectively.
  
  In Figure~\ref{Fig:audi} we have depicted (left) one of the simplest Johansson diagrams of filling Dehn spheres. This is the diagram of a Dehn sphere $\S$ (right) that fills $\mathbb{S}^2\times \mathbb{S}^1$. In this figure we consider $\mathbb{S}^2\times \mathbb{S}^1$ as $\mathbb{S}^2\times [-1,1]$ where the top and bottom covers $\mathbb{S}^2\times \{+1\}$ and $\mathbb{S}^2\times \{-1\}$ are identified by the vertical projection. This example appears completely detailed in~\cite[Example 7.5.1]{tesis}.
     
  If we are given an \emph{abstract diagram}, i.e., an even collection of curves 
  in $S$ coherently identified in pairs, it is possible to know if this 
  abstract diagram is \emph{realizable}: if it is actually the Johansson diagram 
  of a Dehn surface in a $3$-manifold (see~\cite{Johansson1,Johansson2,tesis}). 
  It is also possible to know if the abstract diagram is \emph{filling}: if it is 
  the Johansson diagram of a filling Dehn surface of a 3-manifold (see 
 ~\cite{tesis}). If $\S$ fills $M$, it is possible to build $M$ out of the Johansson diagram of $\S$. Thus, filling Johansson diagrams represent all closed, orientable $3$-manifolds.
  It must be noted that when a diagram $(\D,\tau)$ in $S$ is not realizable, the quotient space of $S$ under the equivalence relation defined by the diagram is something very close to a Dehn surface: it is a 2-dimensional complex with simple, double and triple points, but it cannot be embedded in any $3$-manifold. We reserve the name \emph{pseudo Dehn surface} for these objects. Many constructions about Dehn surfaces, as the presentation of their fundamental group given in Section~\ref{sec:fundamentalgroup}, for example, are also valid for pseudo Dehn surfaces.
      \begin{figure}
    \centering
    \includegraphics[width=0.8\textwidth]
    {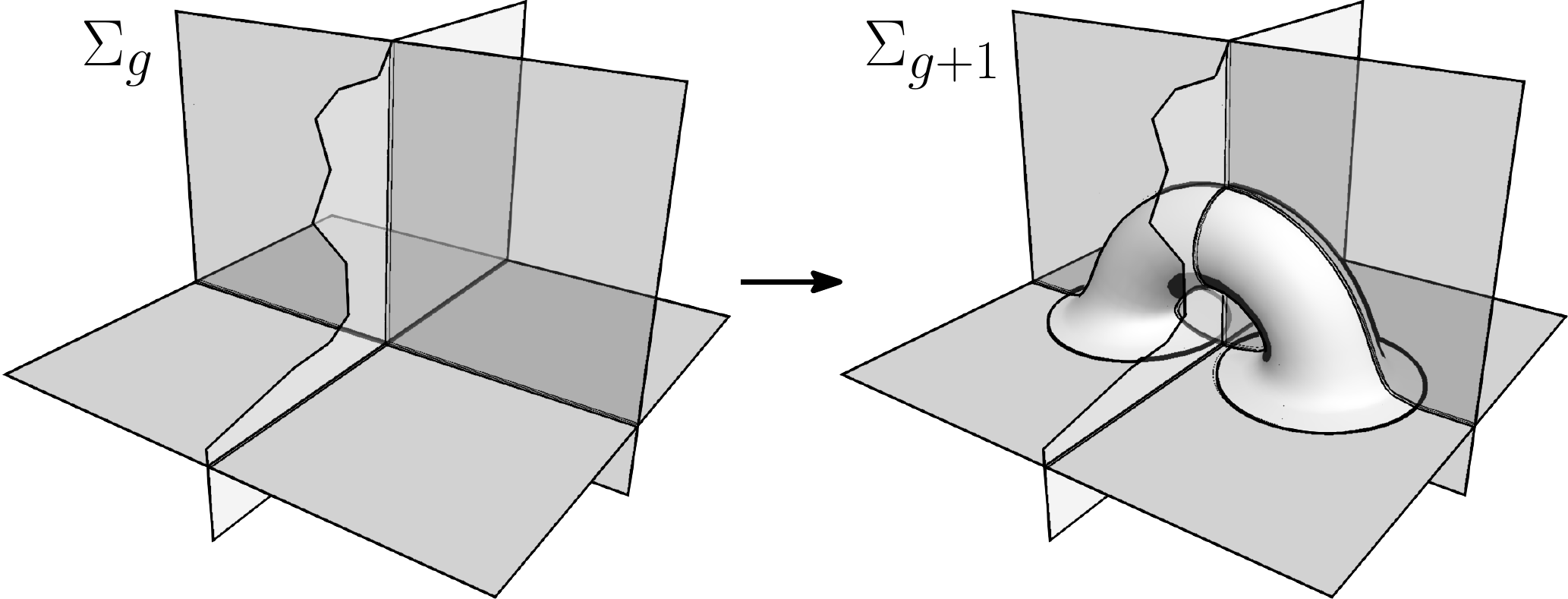}
    \caption{Handle piping}
    \label{Fig:handle}
  \end{figure}
  
    \section{The triple point spectrum}
  \label{sec:spectrum-surgerysurfaces}
  
  In order to understand the structure of the sequence $\TT(M)$, we need to relate the different triple point numbers of $M$. For a shorter notation we will omit the word ``filling'' in the expression ``minimal filling Dehn surface''. If we are given a filling Dehn $g$-torus $\S_g$ of $M$ with $q$ triple points, we can always obtain a filling Dehn $(g+1)$-torus $\S_{g+1}$ of $M$ with $q+2$ triple points by applying to $\S_g$ the \emph{handle piping} modification depicted in Figure~\ref{Fig:handle} in a small neighbourhood of a triple point. If the original $\S_g$ is minimal, we have:  
  \begin{lemma}\label{lem:triple-point-inequality}
  For any $g=0,1,2,\ldots$
   \begin{equation}
   \pushQED{\qed}%
      t_{g+1}(M)\leq t_{g}(M)+2\,,\label{eq:consecutive-triple-point-numbers-3}\qedhere
    \popQED%
   \end{equation}
  \end{lemma}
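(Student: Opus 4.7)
The proof is essentially already sketched in the paragraph immediately preceding the statement: the plan is to exhibit, for each minimal filling Dehn $g$-torus $\Sigma_g\subset M$, a filling Dehn $(g+1)$-torus $\Sigma_{g+1}\subset M$ with exactly two more triple points than $\Sigma_g$. Since $\Sigma_g$ realizes $t_g(M)$ by minimality, the existence of such a $\Sigma_{g+1}$ witnesses the inequality $t_{g+1}(M)\leq t_g(M)+2$.

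First I would fix a minimal filling Dehn $g$-torus $\Sigma_g\subset M$ with $t_g(M)$ triple points, pick any triple point $P\in T(\Sigma_g)$, and choose a small regular neighbourhood $U$ of $P$ in $M$. Inside $U$ the three sheets of $\Sigma_g$ look like the three coordinate planes of $\mathbb{R}^3$, and the picture of the handle piping (Figure~\ref{Fig:handle}) is a purely local modification: one removes from $\Sigma_g$ two small disks lying on two of these sheets and glues in a narrow tube (a handle) whose core runs through $U$. Outside $U$, the Dehn surface is unchanged.

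Next I would verify the three local assertions one must check to conclude. \emph{(i)~Change in genus.} The modification removes two disks from the domain surface $S$ and glues in an annulus, which is precisely the operation of attaching a $1$-handle; hence the domain of $\Sigma_{g+1}$ is a $(g+1)$-torus. \emph{(ii)~Triple points.} Inspection of Figure~\ref{Fig:handle} shows that the original triple point $P$ is destroyed, while the tube creates exactly three new triple points with the remaining sheet, giving a net change of $+2$. \emph{(iii)~Filling property.} Since the modification is supported in $U$ and the complement $M\setminus U$ is untouched, the edges, faces and regions of $\Sigma_{g+1}$ outside $U$ remain open cells. Inside $U$ one checks by direct inspection of the local model that the new edges, faces and regions are again open $1$-, $2$- and $3$-disks (the handle subdivides the three regions of $U$ meeting at $P$ into new cells, each of which is a disk). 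Thus $\Sigma_{g+1}$ fills $M$.

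The only part requiring any real care is step~(iii): confirming that handle piping does not inadvertently merge two faces or two regions and thereby spoil the cell decomposition. This is a finite check on the local picture of Figure~\ref{Fig:handle} and presents no serious difficulty, but it is where one has to be attentive. Once (i)--(iii) are in place, $\Sigma_{g+1}$ is a filling Dehn $(g+1)$-torus in $M$ with $t_g(M)+2$ triple points, and the definition of $t_{g+1}(M)$ as the minimum over all filling Dehn $(g+1)$-tori yields~\eqref{eq:consecutive-triple-point-numbers-3}.
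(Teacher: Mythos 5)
Your proposal is correct and follows exactly the same route as the paper, which proves the lemma by applying the handle piping modification of Figure~\ref{Fig:handle} to a minimal filling Dehn $g$-torus near a triple point. The paper leaves the local verifications to the figure; your steps (i)--(iii) simply make explicit what the figure is meant to convey.
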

  This lemma suggests the following definitions.
  \begin{definition}\label{def:exceptional}
   A minimal Dehn $g$-torus of $M$ is \emph{exceptional} if it has less than $t_{g-1}(M)+2$ triple points.
  \end{definition}
  \begin{definition}\label{def:height}
   The \emph{height} $\H(M)$ of $M$ is the highest genus among all exceptional Dehn $g$-tori of $M$.
   If $M$ has no exceptional Dehn $g$-torus, $\H(M)=0$.
  \end{definition}
  It is clear that if $\H(M)$ is finite, the equality in \eqref{eq:consecutive-triple-point-numbers-3} holds for all $g\geq \H(M)$.
     
    An important result for filling Dehn surfaces which is of great interest here is the following.
    \begin{proposition}
	\label{prop:triplets-regions}
	A Dehn $g$-torus in $M$ with $q$ triple points and $r$ regions fills $M$ if and only if
	\begin{equation}
	  r=q+(2-2g)\,.\label{eq:triplets-regions}
	\end{equation}
    \end{proposition}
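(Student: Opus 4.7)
The plan is a double Euler-characteristic computation that relates the stratification of $M$ induced by $\Sigma$ to its pullback to the domain $S$ via a parametrization $f\colon S\to M$.

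First I would prove the forward implication. If $\Sigma$ fills $M$, then the triple points, edges, faces and regions form a CW-decomposition of $M$ with $q$ zero-cells, $E$ one-cells, $F$ two-cells and $r$ three-cells. At each triple point exactly three double curves meet transversely, producing six edge-endpoints; since every edge is an arc with two endpoints this forces $E=3q$. Pulling back the stratification to $S$ yields a CW-decomposition with $3q$ vertices (the triplets), $2E=6q$ edges (each double curve lifts to a pair of sister curves), and $F$ faces (the parametrization is bijective on the face stratum). Then $\chi(S)=3q-6q+F=2-2g$ gives $F=2-2g+3q$, and substituting into $\chi(M)=q-E+F-r=0$ produces the desired equality $r=q+(2-2g)$.

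For the converse I would redo the same calculation using the compactly supported Euler characteristic $\chi_c$, which is additive on the stratification $M=T(\Sigma)\sqcup(S(\Sigma)-T(\Sigma))\sqcup(\Sigma-S(\Sigma))\sqcup(M-\Sigma)$ and on its pullback to $S$, without any a priori assumption that the strata are cells of the right dimension. Letting $E_a$ denote the number of arc-edges (circle-edges, if any, contribute $0$ to $\chi_c$), the endpoint count at triple points still gives $E_a=3q$, and the pullback identity on $S$ produces $\sum_i\chi_c(f_i)=2-2g+3q$. Feeding these into $\chi_c(M)=0$ yields
\[
\sum_j\chi_c(R_j)=-\bigl(q+(2-2g)\bigr).
\]
For each region $R_j$ the corresponding component $\Sigma_{R_j}$ of the boundary of a regular neighbourhood of $\Sigma$ is a closed orientable surface of genus $g_{R_j}\ge 0$, and the standard computation for a compact $3$-manifold with boundary gives $\chi_c(R_j)=g_{R_j}-1\ge -1$, with equality iff $\Sigma_{R_j}=S^2$. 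The hypothesis $r=q+(2-2g)$ forces the sum above to equal $-r$, so every $g_{R_j}=0$, and by the Poincar\'e conjecture each region is then an open $3$-ball.

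The hard part will be the remaining topological conditions in the converse: that every face is an open disk and every edge is an arc. The inequality $\sum_i\chi_c(f_i)\le F$ is an equality exactly when all faces are disks, and an annular (or higher genus) face, or a circle-edge, would contribute an extra handle to the boundary surface of some adjacent region, contradicting the sphericity of every $\Sigma_{R_j}$ established above. Organising this last bit of topological bookkeeping---and not the essentially formal Euler-characteristic computation---is where the main technical work of the proof will lie.
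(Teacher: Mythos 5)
The paper does not actually prove this proposition; it cites \cite[Theorem~3.7.1]{tesis} and \cite[Lemma~43]{RHomotopies} and says only that the argument ``relies on Euler's characteristic techniques,'' which is the route you take. Your forward implication is the standard Euler-characteristic count and is correct: $E=3q$, the pullback to $S$ gives $F=3q+2-2g$, and $\chi(M)=0$ yields $r=q+2-2g$.

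The converse, however, has a genuine gap. You assert that each region $R_j$ has a single boundary surface $\Sigma_{R_j}$ of genus $g_{R_j}$ and that $\chi_c(R_j)=g_{R_j}-1\ge -1$, but a region of a Dehn surface need not have connected boundary. If $\bar R_j$ has boundary components of genera $g_1,\dots,g_m$, then $\chi_c(R_j)=\sum_i g_i-m$, which is $-m<-1$ when every component is a sphere and $m\ge 2$ (e.g.\ a shell $S^2\times(0,1)$ gives $\chi_c=-2$), and is $\ge 0$ when a component has positive genus. So from $\sum_j\chi_c(R_j)=-r$ you cannot conclude that every summand equals $-1$: a region with two spherical boundary components can be offset by one with a torus boundary, keeping the total at $-r$ without any region being a ball. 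In addition, even a single spherical boundary component with $\chi_c=-1$ does not make the region a ball --- $\mathbb{RP}^3$ minus an open ball has the same data --- so the appeal to the Poincar\'e conjecture needs simple connectivity, which you have not established. Finally, the part you defer (faces are disks, edges are arcs) cannot be ``bookkeeping'' downstream of the regions-are-balls step: an embedded $2$-sphere in $S^3$ has $q=0$, $g=0$, $r=2=q+2-2g$, all regions are balls, and yet the single face is $S^2$, not a disk. This shows that the converse is delicate (indeed it forces an implicit hypothesis such as $q\ge 1$ in the cited sources) and that your planned face/edge argument cannot succeed as stated.
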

    This proposition follows from~\cite[Theorem 3.7.1]{tesis} (see also~\cite[Lemma 43]{RHomotopies}). Its proof relies on Euler's characteristic techniques and it has strong consequences in this context:
    \begin{remark}\label{rmk:at-least-1+2g-triple-points}
      A filling Dehn $g$-torus has at least $2g-1$ triple points.
    \end{remark}
    \begin{remark}\label{rmk:handle-regions-unchanged}
      After a handle piping on a filling Dehn $g$-torus the number of regions remains unchanged. More generally, if $\S_g$ is a filling Dehn $g$-torus of $M$ with $q$ triple points and $\S_{g+1}$ is a filling Dehn $(g+1)$-torus of $M$ with $q+2$ triple points, the number of regions of $\S_g$ and $\S_{g+1}$ is the same.
    \end{remark}
    \begin{remark}\label{rmk:exceptional-reduce-regions}
      Exceptional Dehn $g$-tori reduce the number of regions, that is, an exceptional Dehn $g$-torus 
      in $M$ has less regions than any minimal Dehn $g'$-torus in $M$ with $g'<g$.
    \end{remark}
  \begin{figure}
    \centering
    \subfigure[]{%
      \includegraphics[width=0.49\textwidth]{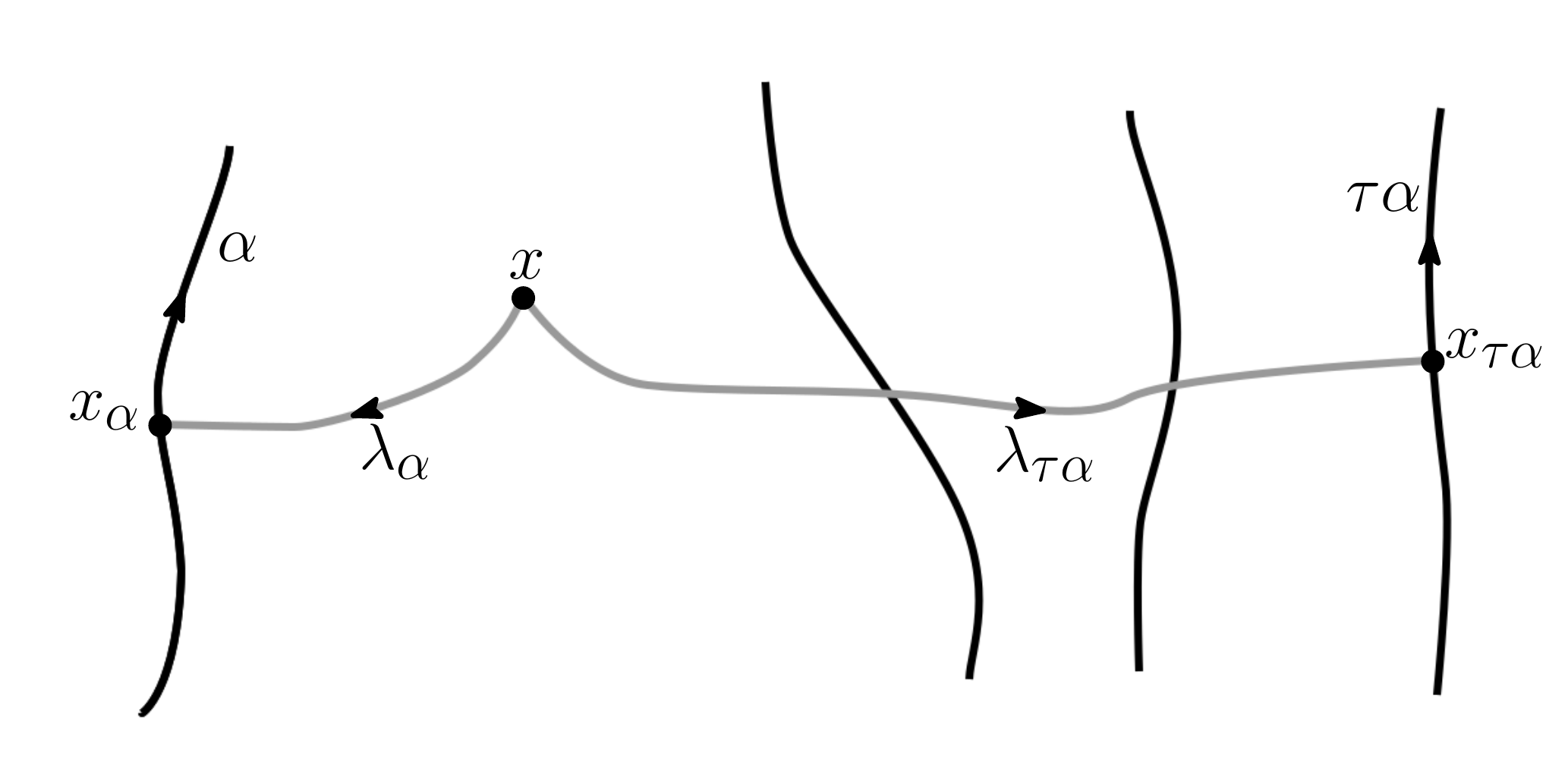}%
      \label{Fig:dual-curve-flat}}
    \subfigure[]{%
      \includegraphics[width=0.49\textwidth]{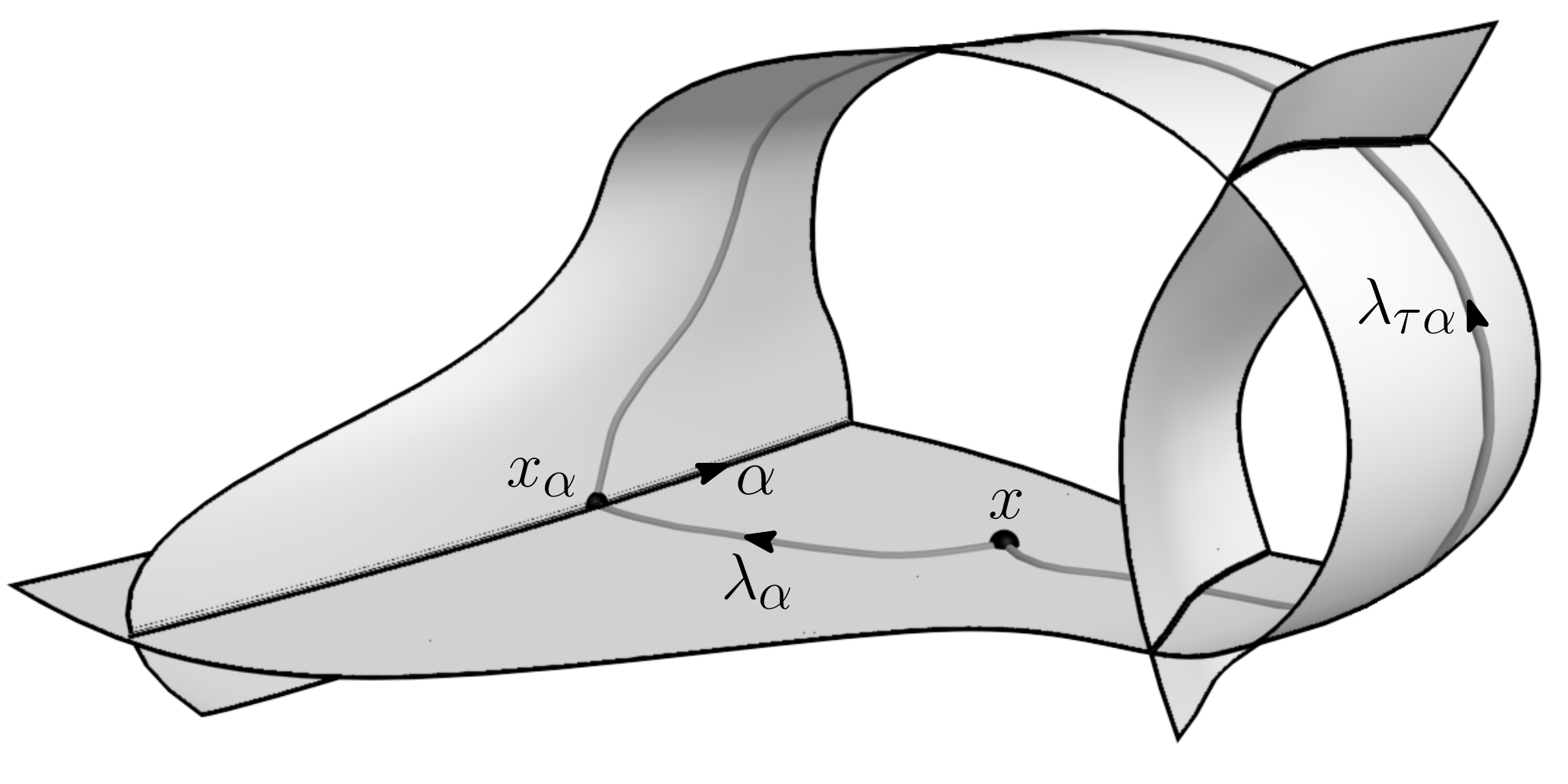}%
      \label{Fig:dual-curve}%
    }
    \caption{Dual curves to the curves of $\D$}
    \label{Fig:dual-curve-flat-dual-curve}
  \end{figure}
  
    As the number of regions is bounded from below, by Remark~\ref{rmk:exceptional-reduce-regions} there cannot be exceptional Dehn $g$-tori in $M$ with arbitrarily high genus. Therefore,
    \begin{theorem}\label{thm:height-finite}
    The height of $M$ is finite.\qed
    \end{theorem}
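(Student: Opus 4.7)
The plan is to argue by contradiction, using the region count as a monotone integer-valued invariant that cannot decrease indefinitely. Suppose $\H(M)=\infty$; then there exists a strictly increasing sequence of genera $g_1<g_2<g_3<\cdots$ each admitting an exceptional Dehn $g_i$-torus. First I would fix the notation $r_g$ for the number of regions of any minimal Dehn $g$-torus of $M$. This is well-defined: all minimal Dehn $g$-tori share the same triple-point count $t_g(M)$, and Proposition~\ref{prop:triplets-regions} then forces the common value $r_g=t_g(M)+2-2g$ independent of the chosen representative.

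The key step is to invoke Remark~\ref{rmk:exceptional-reduce-regions}. An exceptional Dehn $g_{i+1}$-torus is by definition a minimal Dehn $g_{i+1}$-torus, so it contributes $r_{g_{i+1}}$ regions; but the remark asserts that it has strictly fewer regions than any minimal Dehn $g'$-torus with $g'<g_{i+1}$, in particular fewer than a minimal Dehn $g_i$-torus. Hence $r_{g_{i+1}}<r_{g_i}$. Iterating this inequality along the sequence produces an infinite strictly decreasing chain
\[
  r_{g_1}>r_{g_2}>r_{g_3}>\cdots\geq 1
\]
of positive integers, which is impossible. Positivity of $r_g$ is automatic since $M\setminus\Sigma$ is nonempty for any Dehn surface $\Sigma$ (equivalently, the identity $r=q+2-2g$ combined with $r\geq1$ is baked into the filling condition).

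No substantial obstacle should arise; the whole argument is a bookkeeping consequence of Proposition~\ref{prop:triplets-regions} together with the definition of exceptionality. The only subtlety worth flagging is verifying that $r_g$ genuinely depends only on $g$ and $M$, rather than on the choice of minimal representative, which is immediate from the region formula and the fact that minimality fixes the triple-point count.
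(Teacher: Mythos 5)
Your proof is correct and follows exactly the same route as the paper, which compresses the whole argument into a single sentence: the region count of minimal Dehn $g$-tori is bounded below, and Remark~\ref{rmk:exceptional-reduce-regions} forces a strict drop at every exceptional genus, so there can be only finitely many of them. Your write-up just makes explicit the well-definedness of $r_g$ via Proposition~\ref{prop:triplets-regions} and the infinite-descent contradiction that the paper leaves implicit.
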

    
    In~\cite[Example 7.5.2]{tesis} it is shown a filling Dehn $1$-torus of $\mathbb{S}^2\times \mathbb{S}^1$ with just $1$ triple point. It is clearly a minimal Dehn $1$-torus, and by Proposition~\ref{prop:triplets-regions} it has only one region. By Remark~\ref{rmk:exceptional-reduce-regions}, there cannot be an exceptional Dehn $g$-torus in $\mathbb{S}^2\times \mathbb{S}^1$ of genus $g>1$. On the other hand, a Dehn sphere in any $3$-manifold has an even number of triple points~\cite{Haken1}, and therefore the filling Dehn sphere of Figure~\ref{Fig:audi} is minimal. It turns out that $\H(\mathbb{S}^2\times \mathbb{S}^1)=1$ and
    \begin{theorem}
     $\TT(\mathbb{S}^2\times \mathbb{S}^1)=(2,1,3,5,\ldots )\,.$\qed
    \end{theorem}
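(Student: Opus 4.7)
The plan is to package together all the ingredients already assembled in the excerpt: the two known examples, the handle-piping upper bound, and the region-counting lower bound. The whole argument amounts to showing $t_0=2$, $t_1=1$, $\H(\mathbb{S}^2\times\mathbb{S}^1)\leq 1$, and then propagating the values via Lemma~\ref{lem:triple-point-inequality} with equality.

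First, I would dispense with $t_0=2$. The Dehn sphere of Figure~\ref{Fig:audi} gives $t_0\leq 2$. For the lower bound, Haken's parity result forces $t_0$ to be even, so it suffices to rule out a filling Dehn sphere with $q=0$ triple points. By Proposition~\ref{prop:triplets-regions} such a sphere would divide $\mathbb{S}^2\times\mathbb{S}^1$ into exactly $r=0+(2-0)=2$ open 3-balls whose closures cover the manifold along a single 2-sphere; gluing two 3-balls along their boundary produces $\mathbb{S}^3$, not $\mathbb{S}^2\times\mathbb{S}^1$, a contradiction. Hence $t_0=2$.

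Next I would settle $t_1=1$. The filling Dehn $1$-torus of~\cite[Example 7.5.2]{tesis} (reproduced in the paragraph after Theorem~\ref{thm:height-finite}) has exactly one triple point, so $t_1\leq 1$. Remark~\ref{rmk:at-least-1+2g-triple-points} gives $t_1\geq 2\cdot 1-1=1$. This example is also the decisive tool for bounding the height: by Proposition~\ref{prop:triplets-regions} it has $r=1+(2-2)=1$ region, which is the absolute minimum possible for a filling Dehn surface. If there existed an exceptional Dehn $g$-torus with $g>1$, Remark~\ref{rmk:exceptional-reduce-regions} would force it to have strictly fewer regions than the minimal Dehn $1$-torus, i.e.\ fewer than $1$ region, which is absurd. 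Therefore $\H(\mathbb{S}^2\times\mathbb{S}^1)\leq 1$, and in fact equality holds since $t_1=1<t_0+2=4$ makes the minimal Dehn $1$-torus itself exceptional.

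Finally, since no exceptional Dehn $g$-torus exists for $g\geq 2$, equality holds throughout the chain coming from Lemma~\ref{lem:triple-point-inequality}, giving
\[
 t_g(\mathbb{S}^2\times\mathbb{S}^1)=t_1(\mathbb{S}^2\times\mathbb{S}^1)+2(g-1)=2g-1\qquad\text{for all }g\geq 1,
\]
which, combined with $t_0=2$, yields $\TT(\mathbb{S}^2\times\mathbb{S}^1)=(2,1,3,5,\ldots)$. There is no real obstacle here; the only subtle point is the elementary topological argument ruling out a $0$-triple-point filling Dehn sphere, and once that is in hand the remainder is a direct application of Proposition~\ref{prop:triplets-regions} and Remark~\ref{rmk:exceptional-reduce-regions} to the known $1$-triple-point example.
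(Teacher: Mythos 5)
Your argument follows the same plan as the paper (use the two known examples for the upper bounds $t_0\leq 2$ and $t_1\leq 1$, observe that the one-triple-point filling Dehn $1$-torus has a single region, invoke Remark~\ref{rmk:exceptional-reduce-regions} to rule out exceptional Dehn $g$-tori with $g>1$, and then propagate equality via Lemma~\ref{lem:triple-point-inequality}), so the overall structure matches and is correct.

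The one place where you add to the paper's terse justification is the lower bound $t_0\geq 2$, and precisely there you introduce a gap. You argue that a filling Dehn sphere with $q=0$ triple points would, by Proposition~\ref{prop:triplets-regions}, produce two open $3$-balls ``glued along a single $2$-sphere,'' yielding $\mathbb{S}^3$ rather than $\mathbb{S}^2\times\mathbb{S}^1$. But the phrase in quotes silently assumes the Dehn sphere is \emph{embedded}. A Dehn sphere with no triple points may still have double curves, and then the closures of the two regions are identified through a non-injective immersion of $\mathbb{S}^2$, not along a single $2$-sphere; your ``gluing two balls gives $\mathbb{S}^3$'' step is not available in that generality. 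The simplest repair bypasses Proposition~\ref{prop:triplets-regions} entirely: if $T(\Sigma)=\emptyset$, then either $S(\Sigma)=\emptyset$, in which case the unique face is all of $S$ and is not an open $2$-disk, or $S(\Sigma)$ is a nonempty union of circles which are then also the edges and hence are not open $1$-disks; either way $\Sigma$ cannot fill. Thus every filling Dehn surface has at least one triple point, and Haken's parity then gives $t_0\geq 2$. (The paper itself only cites parity, which alone excludes $t_0=1$ but not $t_0=0$, so your instinct to flag this step was sound; only the fix needs changing.) With that one repair the rest of your proof, including $t_g=2g-1$ for $g\geq 1$, is correct.
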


  \section{The fundamental group of a Dehn $g$-torus}
  \label{sec:fundamentalgroup}

  In this section the manifold $M$ containing $\S$ is no longer needed. Although we still talk about ``Dehn surfaces'', the construction only makes use of the Johansson diagram $\D$, so it is valid for pseudo Dehn surfaces.
  
  Let $f:S\to M$  be a Dehn surface in $M$ and denote by $\D$ its Johansson diagram.

  Fix a simple point $x\notin\D$ as the base point of the fundamental group
  $\pi_1(\Sigma)$ of $\Sigma$. We identify $x$ with its preimage 
  under $f$.

  A path in $\Sigma$ is \emph{surfacewise} if it is a path on $S$  mapped to
  $\Sigma$ through $f$. We denote by $\pi_S = f_*\pi_1(S)$ the subgroup of $\pi_1(\Sigma)$
  generated by the sufacewise loops based at $x$. 
  In general we will use the same notation for a surfacewise path in $\Sigma$ 
  and its preimage path in $S$.
 
  Let $\alpha,\tau\alpha$ be two sister curves of $\D$, and take two paths 
  $\lambda_{\alpha},\lambda_{\tau\alpha}$ in $S$, starting from $x$ and
  arriving to related points on $\alpha,\tau\alpha$, respectively
  (see Figure~\ref{Fig:dual-curve-flat-dual-curve}).

  \begin{definition}\label{def:dual-paths}
    We say that the loop $\lambda_\alpha \lambda_{\tau\alpha}^{-1}$ in
    $\Sigma$ based at $x$ is \emph{dual} to $\alpha$.	
  \end{definition}

  \begin{proposition}\label{prop:surfacewise-and-duals-generate-pi1}
    Surfacewise loops based at $x$ and loops dual to the curves 
    of $\D$ generate $\pi_1(\Sigma)$.
  \end{proposition}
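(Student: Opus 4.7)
The plan is to represent an arbitrary homotopy class $[\gamma] \in \pi_1(\Sigma, x)$ as an explicit concatenation of surfacewise loops and dual loops, by cutting $\gamma$ at its crossings with the singular set and inserting auxiliary paths in $S$ to reconnect the pieces through $x$. First I would put $\gamma$ in general position inside $\Sigma$: homotope it rel.\ $x$ so that it avoids the finitely many triple points and meets every double curve transversely. Since triple points are isolated and double curves form a 1-complex inside the 2-complex $\Sigma$, this is a standard transversality argument carried out in local charts of a double curve (where $\Sigma$ looks like two disks glued along an arc). Let $p_1, \ldots, p_{n-1}$ be the resulting crossings, and subdivide $\gamma$ into consecutive arcs $\gamma_1, \ldots, \gamma_n$, where $\gamma_i$ runs from $p_{i-1}$ to $p_i$ (setting $p_0 = p_n = x$). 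Each $\gamma_i$ lies in $\Sigma - S(\Sigma)$ except at its endpoints, hence admits a unique lift $\widetilde\gamma_i$ to $S$ once we specify the preimage of its starting endpoint. Denote by $q_i^-$ the endpoint of $\widetilde\gamma_i$ and by $q_i^+$ the starting point of $\widetilde\gamma_{i+1}$; by transversality these are distinct related points, lying on a pair of sister curves $\alpha_i, \tau\alpha_i \in \D$.

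Next I would use the connectedness of $S$ to pick, for each $i = 1, \ldots, n-1$, auxiliary paths $\lambda_i^-$ and $\lambda_i^+$ in $S$ from $x$ to $q_i^-$ and $q_i^+$ respectively. With the conventions that $\lambda_0^+$ and $\lambda_n^-$ are constant at $x$, define the surfacewise loops
\[
A_i \;=\; f\bigl(\lambda_{i-1}^+ \cdot \widetilde\gamma_i \cdot (\lambda_i^-)^{-1}\bigr), \qquad i = 1, \ldots, n,
\]
and the loops
\[
\beta_i \;=\; f(\lambda_i^-) \cdot f(\lambda_i^+)^{-1}, \qquad i = 1, \ldots, n-1,
\]
the latter being dual to $\alpha_i$ by Definition~\ref{def:dual-paths}. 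A telescoping check, in which the pairs $f(\lambda_i^\pm)^{-1} f(\lambda_i^\pm)$ cancel between consecutive factors, gives
\[
A_1 \cdot \beta_1 \cdot A_2 \cdot \beta_2 \cdots \beta_{n-1} \cdot A_n \;=\; f(\widetilde\gamma_1) \cdot f(\widetilde\gamma_2) \cdots f(\widetilde\gamma_n) \;=\; \gamma,
\]
which exhibits $[\gamma]$ as a product of surfacewise and dual loops.

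The only delicate step is the general-position argument used at the beginning. All subsequent steps are formal bookkeeping. The subtlety is that $\Sigma$ is not a manifold but a 2-dimensional complex whose local models around double curves and triple points must be invoked; transversality has to be applied locally in these models rather than globally. One must also verify that a transverse crossing of a double curve genuinely jumps sheets in the local lift to $S$, so that $q_i^-$ and $q_i^+$ are two related points on a sister pair rather than a single point on one curve. Both facts follow directly from the standard description of $\Sigma$ near its singularities, and the rest of the argument then proceeds exactly as above and applies verbatim to pseudo Dehn surfaces, since it only uses the diagram $\D$ and not any embedding in $M$.
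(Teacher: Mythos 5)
Your proof is correct and takes essentially the same approach as the paper: break the loop at its crossings with the singular set, lift each piece to $S$, and reconnect through auxiliary paths from $x$ so the telescoping product exhibits the loop as a concatenation of surfacewise loops $A_i$ and dual loops $\beta_i$ (the paper uses "crossing-type/corner-type" intersections and inserts its $c_i^{-1}c_i$ at midpoints rather than at the crossing points, but this is a cosmetic difference). The one worry you flag at the end --- that each transverse crossing must ``genuinely jump sheets'' --- is unnecessary: if a crossing does not jump sheets then $q_i^-=q_i^+$, and $\beta_i$ is simply a surfacewise loop (trivial if you choose $\lambda_i^-=\lambda_i^+$), so the identity still expresses $[\gamma]$ in the required form.
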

  \begin{figure}
    \centering
    \subfigure[]{\includegraphics[width=0.48\textwidth]
      {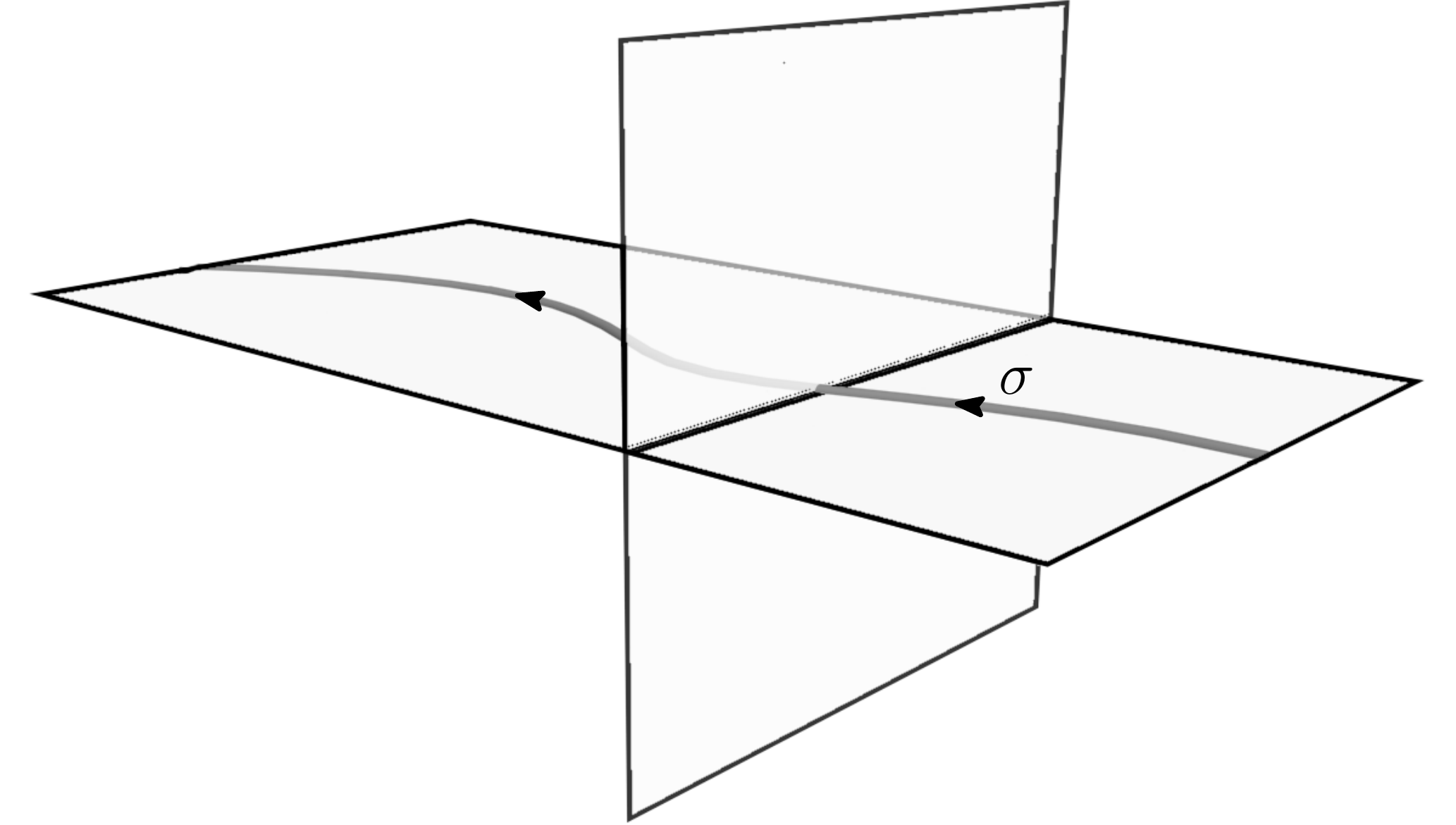}
      \label{Fig:crossing-type}}
    \subfigure[]{\includegraphics[width=0.48\textwidth]
      {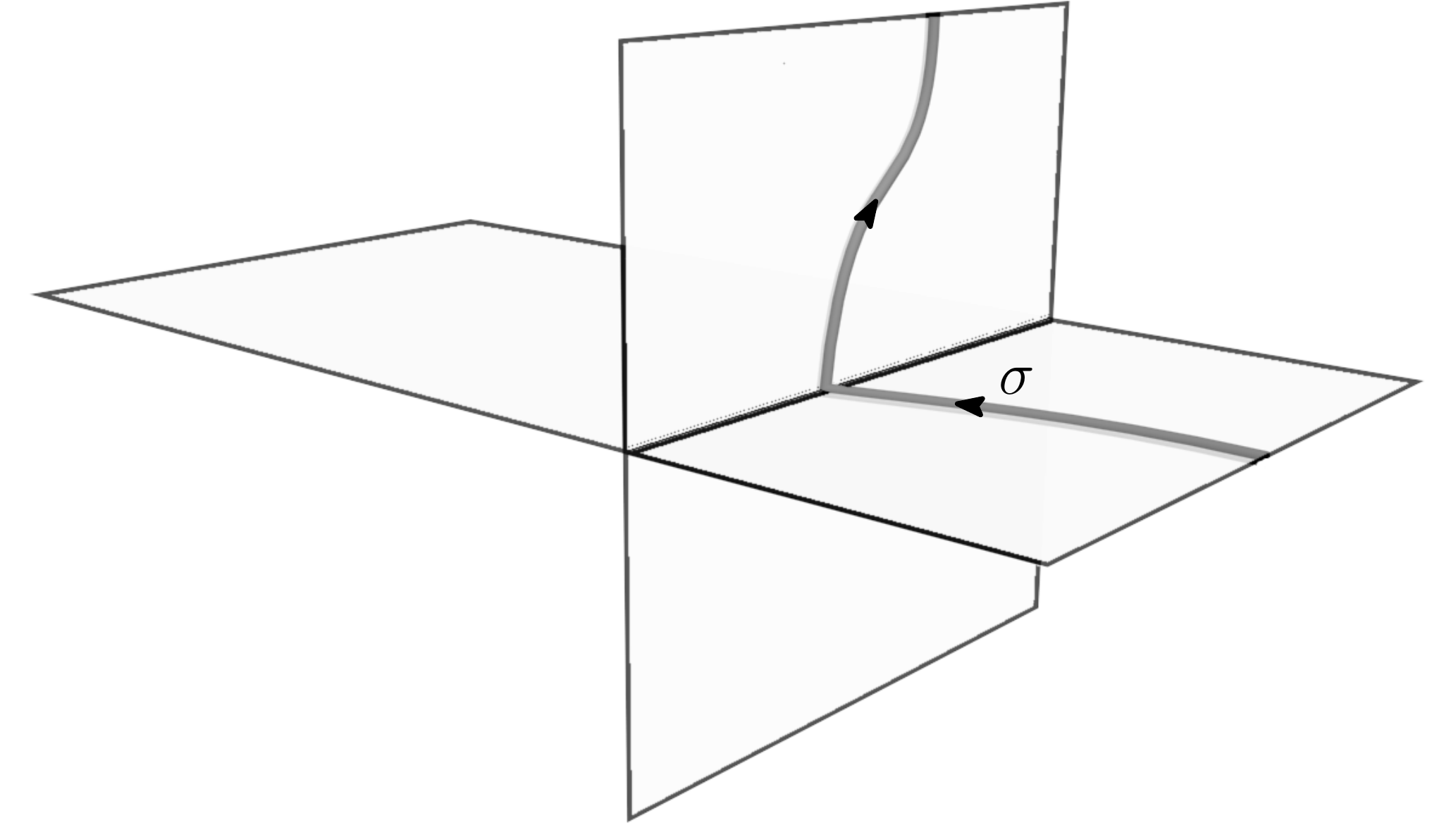}
      \label{Fig:corner-type}}
    \caption{Crossing-type and corner-type intersections with $S(\Sigma)$}
      \label{Fig:crossing-corner-type}
  \end{figure}
  
  \begin{proof}
    Consider a loop $\sigma$ in $\Sigma$ based at $x$. Up to homotopy, we can 
    assume that all the intersections of $\sigma$ with the singular set of $\Sigma$ are of 
    ``crossing type'' or of ``corner type'' as in Figure~\ref{Fig:crossing-type} and~\ref{Fig:corner-type} respectively.
    Therefore, $\sigma$ can be written as a product 
    $\sigma = b_1 b_2\cdots b_k$ of surfacewise paths
    (Figure~\ref{Fig:holed_plane}). If $k=1$ or $2$, either $\sigma$ is
    surfacewise or it is dual to a 
    curve of $\D$, and so there is nothing to prove.
    
    In other case, for each $i=2,\ldots,k-1$ we choose a midpoint $x_i$ of $b_i$
    and a path $c_i$ joining $x$ with $x_i$ (Figure~\ref{Fig:holed_plane}).
    We write $b_i=b_i^-b_i^+$, with
    $b_i^-$ ending and $b_i^+$ starting at $x_i$. The loop $\sigma$
    is homotopic to
    \[
      b_1 b_2^- c_2^{-1} c_2 b_2^+ b_3^-
        \cdots c_{k-2}b_{k-2}^+ b_{k-1}^- c_{k-1}^{-1} 
                c_{k-1} b_{k-1}^+ b_k\,, 
    \]
    and this expression is a product of loops dual to curves of $\D$.
  \end{proof}
    \begin{figure}
    \centering
    \includegraphics[width=0.7\textwidth]
    {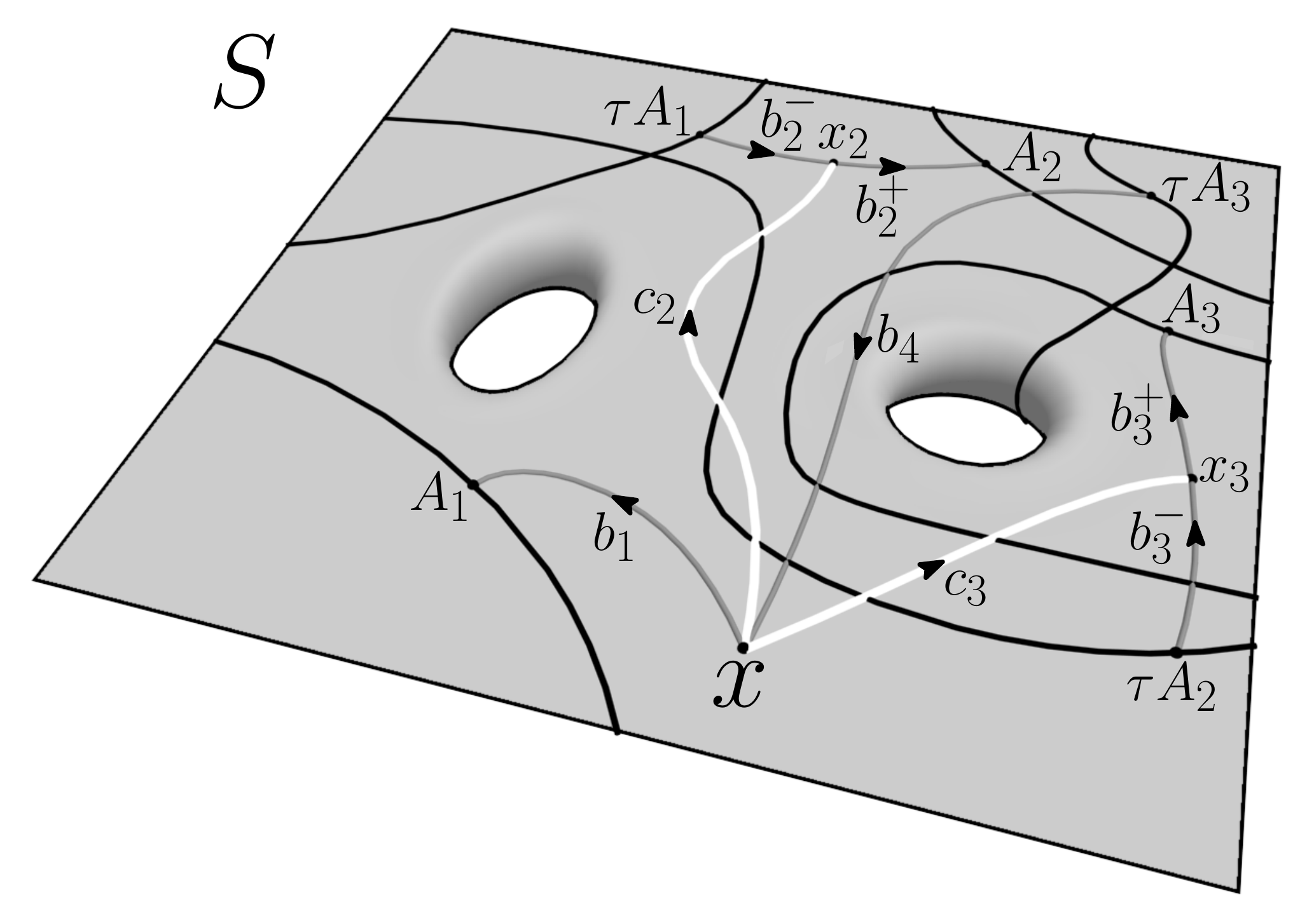}
    \caption{Loops as product of dual loops}
    \label{Fig:holed_plane}
  \end{figure}

  We know that $\pi_S$ is finitely generated because so is $\pi_1(S)$. The
  following lemma allows us to find a finite set of generators for 
  $\pi_1(\Sigma)$.

  \begin{lemma}
    \label{lem:fundamental-dual-paths-surface-conjugate}
    If $a,a'$ are loops on $\Sigma$ dual to $\alpha\in\D$, they are 
    \emph{surfacewise conjugate}, that is, there are two surfacewise loops 
    $s ,t$ based at $x$ such that $a=s  a' t$.
  \end{lemma}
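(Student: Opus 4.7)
The plan is to build the surfacewise loops $s,t$ explicitly by sliding the two endpoints of $a'$ along the sister curves $\alpha$ and $\tau\alpha$ until they meet the endpoints of $a$. Write $a=\lambda_\alpha\lambda_{\tau\alpha}^{-1}$, with $\lambda_\alpha$ ending at a point $p\in\alpha$ and $\lambda_{\tau\alpha}$ ending at the related point $q=\tau(p)\in\tau\alpha$; analogously $a'=\lambda'_\alpha(\lambda'_{\tau\alpha})^{-1}$, ending at $p'\in\alpha$ and $q'=\tau(p')\in\tau\alpha$.

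Since $\alpha$ is a connected immersed circle in $S$, I choose a path $\mu$ on $\alpha$ from $p'$ to $p$, treated as immersed (continuing ``straight ahead'' through crossings, as the section prescribes). Applying the sistering pointwise yields a path $\tau\mu$ on $\tau\alpha$ from $q'$ to $q$, and by construction $f\circ\mu = f\circ\tau\mu$ as paths in $\Sigma$. I then set
\[
  s := \lambda_\alpha \cdot \mu^{-1} \cdot (\lambda'_\alpha)^{-1}, \qquad
  t := \lambda'_{\tau\alpha} \cdot \tau\mu \cdot \lambda_{\tau\alpha}^{-1}.
\]
Each of these lifts to an honest loop in $S$ based at $x$, hence is a surfacewise loop in $\Sigma$ based at $x$.

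To finish I would verify $a = s\, a'\, t$ in $\pi_1(\Sigma)$ by plain concatenation: the factors $(\lambda'_\alpha)^{-1}\lambda'_\alpha$ and $(\lambda'_{\tau\alpha})^{-1}\lambda'_{\tau\alpha}$ cancel, leaving $\lambda_\alpha\cdot \mu^{-1}\cdot \tau\mu\cdot \lambda_{\tau\alpha}^{-1}$; since $f\circ\mu = f\circ\tau\mu$, the middle pair contracts to the constant path at $f(p)=f(q)$, producing exactly $a$. The one genuinely delicate point is bookkeeping: one must orient $\alpha$ and $\tau\alpha$ compatibly so that the pointwise sistering matches $\mu$ to $\tau\mu$ endpoint-to-endpoint, and so that $s,t$ concatenate into loops based at $x$ rather than at some singular point of $\Sigma$. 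Beyond this, the argument is driven entirely by the defining identity $f(y)=f(\tau y)$ for $y\in\D$ and presents no further obstacle.
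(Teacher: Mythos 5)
Your proposal is correct and coincides with the paper's own argument: both insert a path along $\alpha$ from one dual loop's endpoint to the other's (your $\mu$ is the paper's $d^{-1}$), use that the sistering carries it to a path along $\tau\alpha$ projecting to the same arc in $\Sigma$, and then read off $s$ and $t$ by cancellation. The bookkeeping worry you flag is handled exactly as in the paper by the standing convention that basepoints of sister curves are related and sister curves carry coherent orientations.
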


    \begin{figure}
    \centering
    \includegraphics[width=0.7\textwidth]
{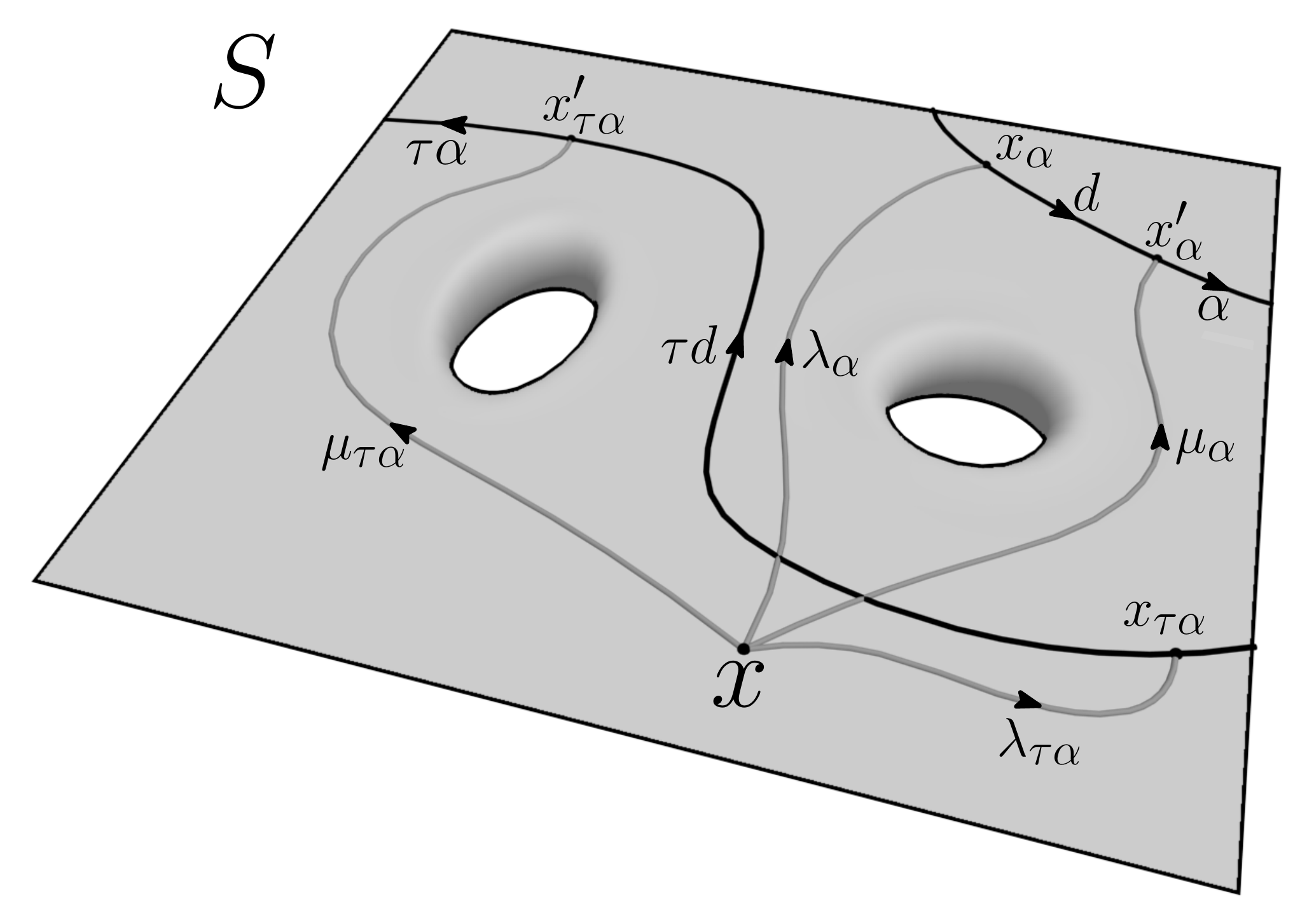}
\caption{Duals to $\alpha$ are
surfacewise conjugate}\label{Fig:holed_plane_duals-conjugate}
\end{figure}

  \begin{proof}
    Put $a=\lambda_\alpha \lambda_{\tau\alpha}^{-1}$, $a'=\mu_\alpha
    \mu_{\tau\alpha}^{-1}$, where 
    $\lambda_\alpha,\lambda_{\tau\alpha},\mu_\alpha,\mu_{\tau\alpha}$ are
    surfacewise paths as in Definition~\ref{def:dual-paths}. Let
    $x_\alpha,x_{\tau\alpha},x'_\alpha,x'_{\tau\alpha}$ be the endpoints of
    $\lambda_\alpha,\lambda_{\tau\alpha},\mu_\alpha,\mu_{\tau\alpha}$ in $\D$,
    respectively (Figure~\ref{Fig:holed_plane_duals-conjugate}). Let $d$ be one
    path in $S$ contained in $\alpha$ joining $x_\alpha$ with $x'_\alpha$, and let $\tau d$ be the
    sister path of $d$ inside $\tau\alpha$ joining $x_{\tau\alpha}$ with
    $x'_{\tau\alpha}$. In $\Sigma$ the paths $d$ and $\tau d$ coincide, and so we have:
    $$a=\lambda_\alpha \lambda_{\tau\alpha}^{-1}=\lambda_\alpha d (\tau d)^{-1}
    \lambda_{\tau\alpha}^{-1}=
    \lambda_\alpha d \mu_{\alpha}^{-1}\mu_\alpha\mu_{\tau\alpha}^{-1}
    \mu_{\tau\alpha} (\tau d)^{-1} \lambda_{\tau\alpha}^{-1}= s  a' t\,,$$
where $s =\lambda_\alpha d \mu_{\alpha}^{-1}$ and $t=\mu_{\tau\alpha}
(\tau d)^{-1} \lambda_{\tau\alpha}^{-1}$ are surfacewise loops in $\Sigma$
based at $x$.
\end{proof}

From now on, we will fix a set of generators $s_1,s_2,\ldots,s_{2g}$ of $\pi_S$,
and we will also fix a set of preferred dual loops to the curves of
$\D$ as follows. For each diagram curve
$\alpha\in\D$ we choose a \emph{basepoint} $x_\alpha$ and a joining
arc $\lambda_\alpha$
from $x$ to $x_\alpha$, such that the basepoints of sister curves are
related (see Figure~\ref{Fig:dual-curve-flat-dual-curve}). The preferred dual loop of $\alpha\in\D$ is
        \[
            a = \lambda_\alpha \lambda_{\tau\alpha}^{-1}\,.
        \]
If a curve of $\D$ is denoted with lowercase greek
letters
\[
  \alpha, \beta, \gamma, \ldots, 
              \alpha_1, \alpha_2, \ldots, \alpha_i, \ldots, \tau\alpha\,,
\]
its preferred dual loop will be denoted with
the corresponding lowercase roman letters
\[
  a, b, c,\ldots, a_1, a_2,\ldots, a_i,\ldots, \tau a\,.
\]
    \begin{figure}
    \centering
    \includegraphics[width=0.4\textwidth]
{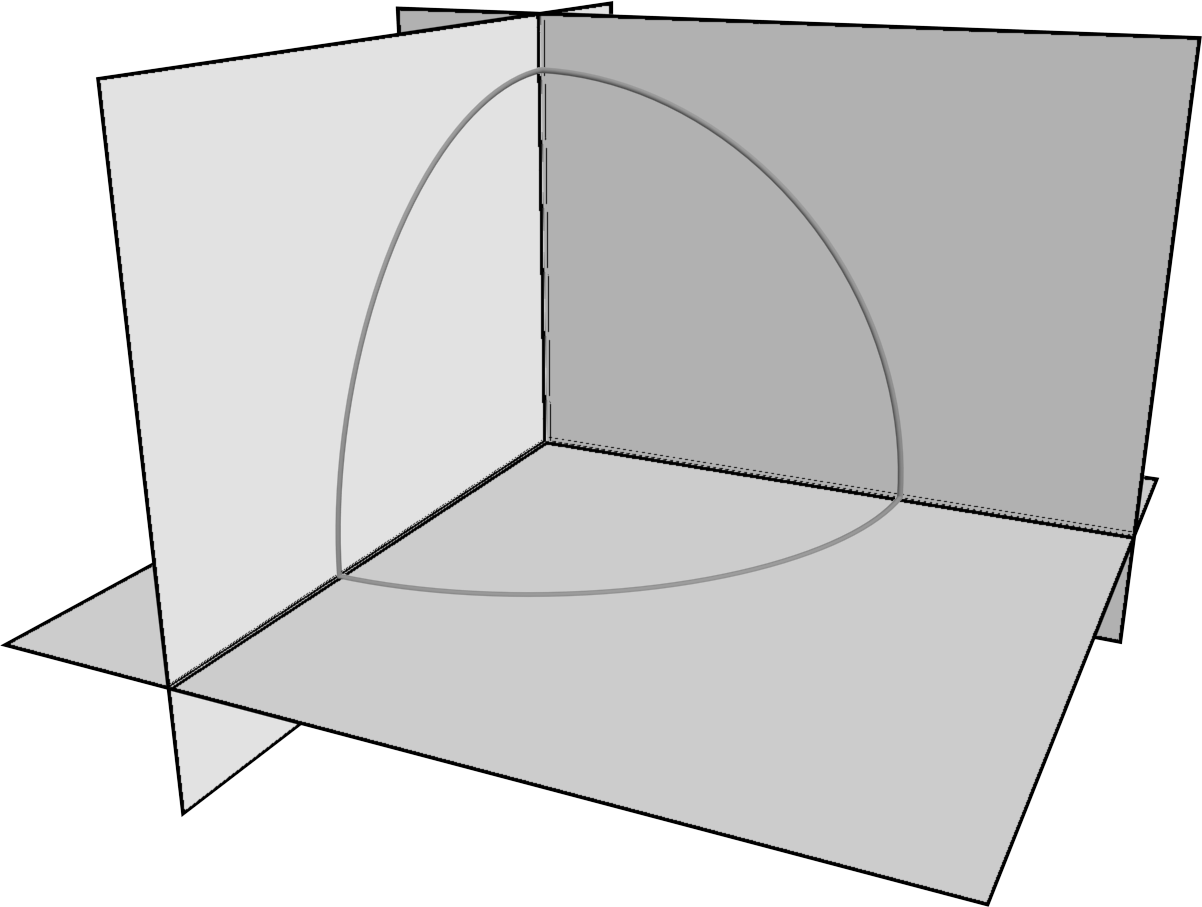}
\caption{Contractible loop near a triple point}\label{Fig:contractible-loop}
\end{figure}

By Proposition~\ref{prop:surfacewise-and-duals-generate-pi1} and Lemma
\ref{lem:fundamental-dual-paths-surface-conjugate}, we have:
\begin{theorem}\label{thm:finite-set-of-generators-pi1}
If $\alpha_1,\alpha_2,\ldots ,\alpha_{2k}$ are the curves of $\D$, then
$\pi_1(\Sigma)$ is generated by
\[
  \pushQED{\qed}%
  s_1,s_2,\ldots,s_{2g},a_1,a_2,\ldots,a_{2k}\,.\qedhere%
  \popQED%
\]
\end{theorem}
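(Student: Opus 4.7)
The plan is to simply combine Proposition~\ref{prop:surfacewise-and-duals-generate-pi1} with Lemma~\ref{lem:fundamental-dual-paths-surface-conjugate}, and feed in the fact that $\pi_S$ is finitely generated by $s_1,\ldots,s_{2g}$.

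First I would take an arbitrary element $\sigma\in\pi_1(\Sigma)$ based at $x$. By Proposition~\ref{prop:surfacewise-and-duals-generate-pi1}, $\sigma$ is a product of surfacewise loops and loops dual to curves of $\D$. Each surfacewise loop represents an element of $\pi_S=f_*\pi_1(S)$, so it is a word in $s_1,\ldots,s_{2g}$ (and their inverses), since those were fixed at the start as a generating set of $\pi_S$. Thus it only remains to handle the dual-loop factors.

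Next I would take any dual factor, say a loop $a'$ dual to some curve $\alpha_i\in\D$ appearing in that product. By Lemma~\ref{lem:fundamental-dual-paths-surface-conjugate}, the preferred dual loop $a_i=\lambda_{\alpha_i}\lambda_{\tau\alpha_i}^{-1}$ and $a'$ are surfacewise conjugate: there exist surfacewise loops $s,t$ based at $x$ with $a' = s\, a_i\, t$. Rewriting $s$ and $t$ as words in $s_1,\ldots,s_{2g}$ expresses $a'$ as a word in $s_1,\ldots,s_{2g},a_i$. Doing this for every dual factor in the decomposition of $\sigma$, and concatenating with the surfacewise factors already rewritten above, yields a word for $\sigma$ in the letters $s_1,\ldots,s_{2g},a_1,\ldots,a_{2k}$. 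Since $\sigma$ was arbitrary, these elements generate $\pi_1(\Sigma)$, as claimed.

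There is essentially no obstacle here: the real work was done in the preceding proposition and lemma, and the theorem is the immediate packaging of those two results together with the finite generation of $\pi_1(S)$ (which reduces the a~priori infinite collection of surfacewise loops to the finite set $s_1,\ldots,s_{2g}$, and the a~priori infinite collection of dual loops to the finite set $a_1,\ldots,a_{2k}$, one per curve of $\D$).
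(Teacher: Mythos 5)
Your proof is correct and follows exactly the paper's own route: the theorem is stated in the paper as an immediate consequence of Proposition~\ref{prop:surfacewise-and-duals-generate-pi1} and Lemma~\ref{lem:fundamental-dual-paths-surface-conjugate}, and your write-up just spells out the routine rewriting that those two results permit.
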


    \begin{figure}
    \centering
    \includegraphics[width=0.7\textwidth]
{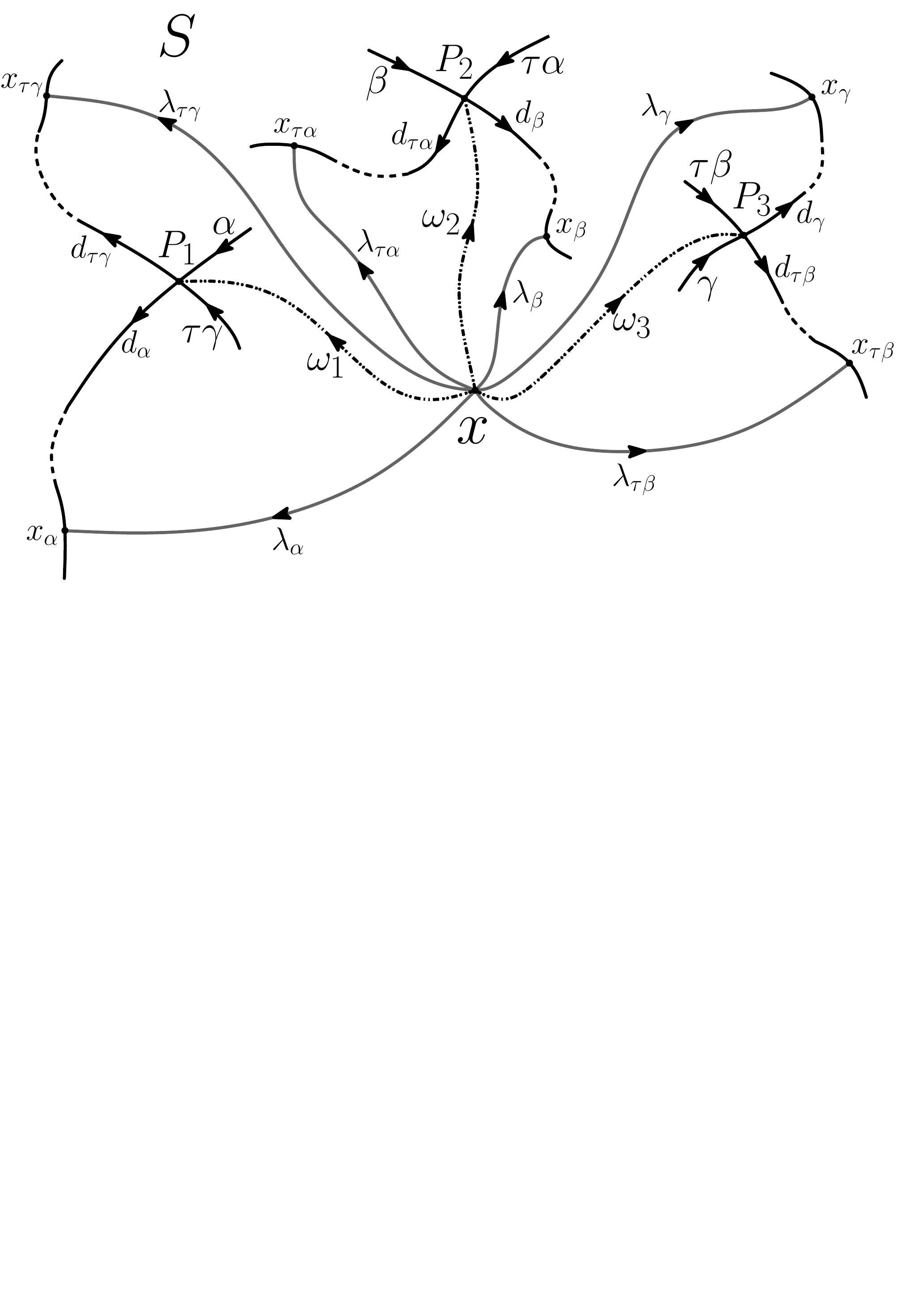}
\caption{Triple point relation (diagram view)}\label{Fig:triple-point-diagram}
\end{figure}

In order to obtain a presentation of $\pi_1(\Sigma)$, we need to establish a
set of relators associated to the generators of Theorem~\ref{thm:finite-set-of-generators-pi1}.
These relators will be a natural extension of Haken's relators for
Dehn spheres (see also~\cite{racsam,tesis}).
  \begin{enumerate}[\em(R1)]
    
    \item \emph{Dual loop relations}. By construction, if 
      $\alpha,\tau \alpha$ are two sister curves of $\D$, their dual loops 
      $a,\tau a$ verify $\tau a= a^{-1}$.

    \item \emph{Triple point relations}. The idea behind this relation is that any small circuit around
      a triple point as that of Figure~\ref{Fig:contractible-loop}
      is homotopically trivial.
      Assume that all the curves of $\D$ are oriented,
      with sister curves coherently oriented.
      Let $P$ be a triple point of $\Sigma$ and let $P_1,P_2,P_3$ be the 
      three crossings of $\D$ in the triplet of $P$. Label the curves of 
      $\D$ intersecting at these crossings as in 
      Figure~\ref{Fig:triple-point-diagram}.
      Consider three paths 
      $\omega_1,\omega_2,\omega_3$ joining $x$ with $P_1,P_2,P_3$ respectively. 
      At $P_1$, consider the path $d_\alpha$ contained in $\alpha$ that travels 
      from $P_1$ along the branch labelled $\alpha$ in the positive 
      sense until it reaches the basepoint $x_\alpha$ of $\alpha$. Consider 
      also 
      the similarly defined paths $d_\beta,d_\gamma$ contained in 
      $\beta,\gamma$ and joining $P_2,P_3$ with $x_\beta,x_\gamma$ 
      respectively, and the sister paths 
      $d_{\tau\alpha},d_{\tau\beta},d_{\tau\gamma}$ of 
      $d_{\alpha},d_{\beta},d_{\gamma}$ respectively, that join $P_2,P_3,P_1$ 
      with $x_{\tau\alpha},x_{\tau\beta},x_{\tau\gamma}$ along 
      $\tau\alpha,\tau\beta,\tau\gamma$, respectively. If we introduce the 
      loops
      \begin{align*}
        t_\alpha&=\omega_1 d_\alpha \lambda_\alpha^{-1}                       &
        t_\beta&=\omega_2 d_\beta \lambda_\beta^{-1}                          &
        t_\gamma&=\omega_3 d_\gamma \lambda_\gamma^{-1}                       \\
        t_{\tau\alpha}&=\omega_2 d_{\tau\alpha} \lambda_{\tau\alpha}^{-1}     &
        t_{\tau\beta}&=\omega_3 d_{\tau\beta} \lambda_{\tau\beta}^{-1}        &
        t_{\tau\gamma}&=\omega_1 d_{\tau\gamma} \lambda_{\tau\gamma}^{-1}
      \end{align*}
      we get the triple point relation around $P$:
      \begin{equation}
        \label{ec:3pto_relation}
        t_\alpha a t_{\tau\alpha}^{-1} \,
            t_\beta b t_{\tau\beta}^{-1} \,t_\gamma c t_{\tau\gamma}^{-1}=1\,.
      \end{equation}
      Note that the loops 
      $t_\alpha,t_{\tau\alpha}, t_\beta,t_{\tau\beta}, t_\gamma$ and 
      $t_{\tau\gamma}$ are surfacewise elements of $\Sigma$ and so they can be
      expressed as words in $s_1,s_2,\ldots,s_{2g}$.
      
    \begin{figure}
    \centering
      \includegraphics[width=0.7\textwidth]
      {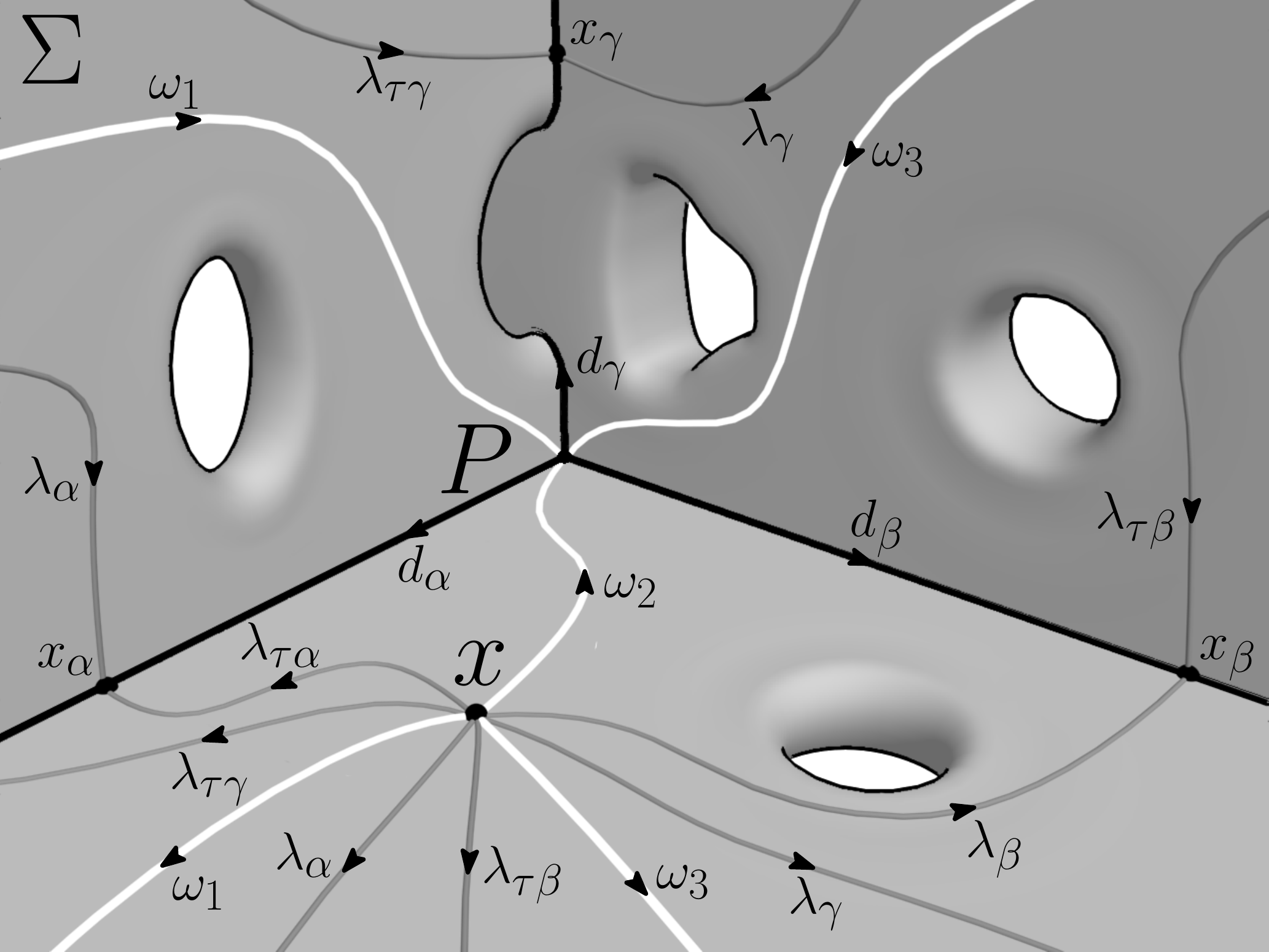}
      \caption{Triple point relation (Dehn surface
      view)}\label{Fig:triple-point-Sigma}
    \end{figure}

    \item \emph{Double curve relations}.
      In~\cite{racsam,RHomotopies} this relators did not appear because in $\mathbb{S}^2$ all loops 
      are homotopically trivial, while in a $g$-torus this is not true.
      Let $\alpha,\tau\alpha$ be a pair of sister curves 
      of $\D$. We orient both curves coherently starting from their basepoints 
      $x_\alpha,x_{\tau\alpha}$. The loops 
      $\lambda_\alpha \alpha \lambda_{\alpha}^{-1}$ and 
      $\lambda_{\tau\alpha} \tau\alpha \lambda_{\tau\alpha}^{-1}$ in $S$ are 
      related in $\Sigma$ because $\alpha$ and $\tau\alpha$ coincide in 
      $\Sigma$:
      \[
        \lambda_\alpha \alpha \lambda_{\alpha}^{-1}=
\lambda_\alpha\lambda_{\tau\alpha}^{-1} \lambda_{\tau\alpha} \tau\alpha
\lambda_{\tau\alpha}^{-1} \lambda_{\tau\alpha} \lambda_{\alpha}^{-1}\,.
      \]
      If we also denote by $\alpha$ and $\tau\alpha$ the surfacewise elements 
      of $\pi_1(\Sigma)$ represented by 
      $\lambda_\alpha \alpha \lambda_{\alpha}^{-1}$ and
      $\lambda_{\tau\alpha} \tau\alpha \lambda_{\tau\alpha}^{-1}$, 
      respectively, this relation can be written as
      \begin{equation}\label{eq:double-curve-relator}
        \alpha=a \,\tau\alpha \, a^{-1}\,. 
      \end{equation}

    \item \emph{Surface relations}. Those are the relations that the
      surfacewise generators $s_1,s_2,\ldots,s_{2g}$ verify when considered as 
      elements of $\pi_1(S)$. 
  \end{enumerate}
  Any relation among those specified above define an associated 
  \emph{dual loop relator}, \emph{triple point relator}, 
  \emph{double curve relator} or \emph{surface relator} in the natural way.

  \begin{theorem}
    \label{thm:fundamental-group-presentation}
    If
    $\D=\{\alpha_1,\alpha_2,\ldots,\alpha_{2k}\}$, the fundamental group
    $\pi_{1}(\Sigma)$ is isomorphic to
    \[
      \pi (\D)=\langle\, s_1,s_2,\ldots,s_{2g},a_1,a_2,\ldots,a_{2k} \mid
      \text{R1, R2, R3 and R4 relators}\,\rangle\,.
    \]
  \end{theorem}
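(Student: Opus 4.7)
The plan is the standard three-step verification. First, I would define a map $\phi : \pi(\D) \to \pi_1(\Sigma)$ by sending each symbol $s_i$ to its surfacewise loop and each $a_j$ to the preferred dual loop of $\alpha_j$, and check that it is well-defined by verifying R1--R4 hold in $\pi_1(\Sigma)$. R1 is immediate from the definition $\tau a = \lambda_{\tau\alpha}\lambda_\alpha^{-1} = a^{-1}$. For R2, the left-hand side of~\eqref{ec:3pto_relation} is homotopic (by sliding the three arms close to $P$) to a small loop on $\Sigma$ encircling the triple point $P$ as in Figure~\ref{Fig:contractible-loop}, and since a neighbourhood of $P$ in $\S$ is three disks glued along coordinate arcs, such a loop bounds. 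R3 is a direct consequence of the identification $\alpha = \tau\alpha$ in $\Sigma$, made explicit just above the statement. R4 is automatic since $f_*$ is a group homomorphism on $\pi_1(S)$. Surjectivity of $\phi$ then follows from Theorem~\ref{thm:finite-set-of-generators-pi1}.

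Injectivity is the main content. The strategy I would pursue is to endow $\S$ with a CW structure faithful to its Dehn surface combinatorics: the $0$-skeleton is the union $T(\S)\cup\{x\}\cup\{x_\alpha:\alpha\in\D\}$; the $1$-skeleton consists of the edges of $\S$ together with the preferred arcs $\lambda_\alpha$; and the $2$-cells come in three families, namely small disks around each triple point, bigons witnessing $\alpha = \tau\alpha$ for each sister pair, and $2$-cells whose lifts through $f$ tile the domain surface $S$. Reading off the standard edge-path presentation for $\pi_1(\S)$ from this complex and performing Tietze transformations, I expect these three families of $2$-cells to yield exactly the relators R2, R3 and R4 respectively, while R1 corresponds to eliminating the redundant generator introduced by orienting each pair of sister curves twice.

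The main obstacle is making the Tietze bookkeeping rigorous in the general (non-filling) case, where lifts of faces to $S$ need not be disks and therefore force one into the full group $\pi_1(S)$ precisely through R4. A cleaner alternative I would also pursue is a direct disk-diagram argument: given a null-homotopy $H:D^2\to\S$ of a word representing the identity, perturb $H$ into general position with respect to $S(\S)$, so that $H^{-1}(S(\S))$ is a finite graph in $D^2$ whose complementary regions map to $\S\setminus S(\S)$ and lift back to $S$; each region then contributes either a surface relator (interior of a face), a double-curve relator (crossing an edge of $\S$), or a triple-point relator (encircling a triple point). The delicate case is the triple point, where one must track the six surfacewise prefixes $t_\alpha, t_{\tau\alpha}, \ldots, t_{\tau\gamma}$ to recover the precise form of~\eqref{ec:3pto_relation}.
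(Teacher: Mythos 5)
Your proposal takes a genuinely different route from the paper. You sketch a combinatorial argument: endow $\Sigma$ with an ad hoc CW structure and read off the presentation via edge-path groups and Tietze moves, or, alternatively, run a van Kampen disk-diagram argument on a null-homotopy $H:D^2\to\Sigma$. The paper instead proves injectivity of the tautological map $\varepsilon:\pi(\D)\to\pi_1(\Sigma)$ through covering spaces and permutation representations. Its key device is Lemma~\ref{lem:representations-factorize}: every representation $\rho:\pi(\D)\to\Omega_m$ factors through $\varepsilon$. One constructs from $\rho$ a covering $\hat S\to S$ using the surface relations R4, lifts the diagram to $\hat\D$, shows the sistering lifts to a well-defined involution $\hat\tau$ using R1 and R3 (Claims~\ref{claim:2-hat-tau-well-defined} and~\ref{claim:1-hat-tau-involutive}), and shows via R2 that the crossings of $\hat\D$ fall into triplets (Claim~\ref{claim:4-crossings-in-triples}); the resulting pseudo Dehn surface $\hat\Sigma$ covers $\Sigma$ with monodromy $\hat\rho$ satisfying $\rho=\hat\rho\circ\varepsilon$. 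Injectivity then drops out by Cayley's theorem applied to a faithful permutation representation of $\pi(\D)$. This is precisely what sidesteps the obstacle you flag: in the non-filling case the faces of $\Sigma$ need not be disks, so any cellular or Tietze bookkeeping has to manage non-disk $2$-cells and decompose their boundary words into R1--R4; the paper never confronts this, since all four relator families are consumed at once in the lifted-diagram construction. Your van Kampen alternative has an additional subtlety worth recording: a generic disk map $D^2\to\Sigma$ misses $T(\Sigma)$, yet the small loop of Figure~\ref{Fig:contractible-loop} is not null-homotopic in $\Sigma\setminus T(\Sigma)$ (the link of a triple point is a graph with free, nontrivial fundamental group), so any null-homotopy must pass through triple points, and the local model for $H$ near such a preimage is not supplied by ordinary transversality. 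Both routes plausibly lead to proofs, but the paper's is cleaner, and your sketch leaves the decisive combinatorial steps unresolved while acknowledging them.
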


  We need to introduce some notation and a previous result before being able
  to give the proof of this theorem. 
  If we consider the generators of $\pi (\D)$ as elements of $\pi_1 (\Sigma)$,
  we are defining implicitly a homomorphism
  $\varepsilon:\pi (\D)\to\pi_1(\Sigma)$ which is surjective by 
  Theorem~\ref{thm:finite-set-of-generators-pi1}. To prove that   
  $\varepsilon$ is also injective we will prove first that $\pi(\D)$ behaves 
  exactly as $\pi_1(\Sigma)$ when dealing with covering spaces:
  the covering spaces of $\Sigma$ are completely characterized by 
  representations of $\pi(\D)$ on a permutation group. For $m\in\NN$ we 
  denote by $\Omega_m$ the group of permutations of the finite set 
  $\{1,2,\ldots, m\}$, and we denote by $\Omega_\infty$ the group of 
  permutations of the countable set $\NN$.
  
  \begin{lemma}
    \label{lem:representations-factorize}
    Every representation $\rho:\pi (\D)\to \Omega_m$, with $m\in\NI$, factors 
    through $\varepsilon$, i.e., there exists a representation 
    $\hat\rho:\pi_1(\Sigma)\to\Omega_m$ such that the following diagram 
    commutes.
    \[
      \xymatrix{
        \pi(\D) \ar[d]_{\varepsilon} \ar[r]^{\rho} & \Omega_m \\
        \pi_1(\Sigma) \ar[ur]_{\hat\rho}
      }
    \]
  \end{lemma}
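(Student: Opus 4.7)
The plan is to exploit the standard bijection between $m$-sheeted coverings of $\Sigma$ (with labelled basepoint fibre) and homomorphisms $\pi_1(\Sigma)\to\Omega_m$. Starting from $\rho:\pi(\D)\to\Omega_m$, I would construct a covering $p:\tilde\Sigma\to\Sigma$ whose monodromy, read on the generators used to define $\pi(\D)$, reproduces $\rho$. The induced monodromy representation will be the desired $\hat\rho:\pi_1(\Sigma)\to\Omega_m$ and the identity $\hat\rho\circ\varepsilon=\rho$ will hold automatically on generators and hence everywhere.

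The construction of $p$ would proceed in two stages. First, build an $m$-sheeted covering $q:\tilde S\to S$ of the domain: the surface relators R4 are by definition relators of a presentation of $\pi_1(S)$, so the permutations $\rho(s_1),\ldots,\rho(s_{2g})$ assemble into a representation $\pi_1(S)\to\Omega_m$ and hence determine such a $\tilde S$. Second, glue $\tilde S$ to itself by prescribing, for every pair of sister curves $\alpha,\tau\alpha\in\D$, that the lift of $\alpha$ lying in the $j$-th sheet of $\tilde S$ is identified with the lift of $\tau\alpha$ lying in the $\rho(a)(j)$-th sheet, matching orientations coherently. Sheetwise composition with $f$ then gives a candidate projection $p:\tilde\Sigma\to\Sigma$.

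The four relator families are precisely what is needed to make $p$ a covering map. R4 is consumed in producing $\tilde S$. R1 makes the identification along sister curves symmetric, since $\rho(\tau a)=\rho(a)^{-1}$. R3 ensures that the holonomy in $\tilde S$ around the surfacewise loop $\lambda_\alpha\alpha\lambda_\alpha^{-1}$ in sheet $j$ agrees with that of $\lambda_{\tau\alpha}\tau\alpha\lambda_{\tau\alpha}^{-1}$ in sheet $\rho(a)(j)$, so that $p$ is locally trivial along each edge of $\Sigma$. The delicate point, and the main obstacle of the argument, is R2: around each triple point one must assemble the six local half-planes of $\tilde S$ (three sister pairs) into a consistent tubular cover of a neighbourhood of the triple point. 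Evaluated under $\rho$, relation~\eqref{ec:3pto_relation} is precisely the statement that the cyclic composition of the three sister-gluings $\rho(a),\rho(b),\rho(c)$, conjugated by the surfacewise permutations attached to the auxiliary paths $\omega_1,\omega_2,\omega_3$ and the words $t_\alpha,t_\beta,\ldots\,$, is the identity permutation; this is exactly the condition required for local triviality of $p$ over the triple point.

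Once $p$ is verified to be a covering, I would define $\hat\rho$ as its monodromy based at $x$. By construction $\hat\rho(s_i)=\rho(s_i)$, and for each preferred dual loop $a=\lambda_\alpha\lambda_{\tau\alpha}^{-1}$ the lift starting on sheet $j$ travels along $\lambda_\alpha$ to $\alpha$, crosses via the sister gluing to sheet $\rho(a)(j)$ on $\tau\alpha$, and returns along $\lambda_{\tau\alpha}^{-1}$ inside that sheet, so $\hat\rho(a)=\rho(a)$. Consequently $\hat\rho\circ\varepsilon$ agrees with $\rho$ on the generating set supplied by Theorem~\ref{thm:finite-set-of-generators-pi1}, and the factorization follows.
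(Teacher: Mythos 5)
Your plan is essentially the paper's proof: build the cover $\hat S\to S$ from $\rho$ restricted to the surfacewise generators (consuming R4), lift the diagram $\D$ to $\hat S$, and declare a lifted sistering using $\rho(a)$; then R1 gives that the sistering is an involution, R3 gives that the assignment of sister curves is well-defined independently of which fibre point over the basepoint $x_\alpha$ one reads it from, and R2 gives that the lifted crossings are identified in triplets; the quotient of $\hat S$ by this lifted sistering is the covering $\hat\Sigma$ of $\Sigma$, and reading its monodromy at $x$ returns $\rho$ on generators. One small imprecision worth flagging: the phrase ``the lift of $\alpha$ lying in the $j$-th sheet'' is not quite well posed, since a single lift of $\alpha$ may pass through several points of the fibre $h^{-1}(x_\alpha)$; the paper anchors the sistering to the chosen lift $x_\alpha^{i}$ of the diagram basepoint and then proves (Claim~\ref{claim:2-hat-tau-well-defined}, via the double-curve relation $\alpha a=a\,\tau\alpha$) that the assignment does not depend on which fibre point on a given lifted curve one uses, which is exactly the well-definedness you are implicitly invoking with your R3 holonomy remark.
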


\begin{proof}
  Fix an $m\in\NI$, and consider a representation $\rho:\pi(\D)\to \Omega_m$.
  As $\pi(\D)$ verifies the surface relations we have a
  natural homomorphism
  \[
    \zeta:\pi_1(S)\to \pi(\D)\,.
  \]
  The map $\rho_S=\rho\circ \zeta$ is a representation of $pi_1(S)$, it is the 
  monodromy homomorphism of a $m$-sheeted covering map $h:\hat{S}\to S$.

  Let $x^1,x^2,\ldots$ be the different points of $\hat{S}$ on the fiber of $h$ 
  over $x$, labelled in such way that for any $s\in\pi_1(S)$ the lift of $s$ 
  starting in $x^i$ has its end in $x^j$, where $j=\rho_S(s)(i)$.

If we lift the diagram $\D$ to $\hat{S}$, we obtain another diagram $\hat{\D}$ in $\hat{S}$.
We will define a ``sistering'' $\hat{\tau}:\hat{\D}\to\hat{\D}$ between the
curves of $\hat{\D}$, with the expected compatibility with $\tau$. For any
curve $\alpha\in\D$, we call $\lambda_\alpha^i$ the lift of $\lambda_\alpha$
starting at $x^i$, and $x_\alpha^i$ the endpoint of $\lambda_\alpha^i$. The
sistering $\hat{\tau}$ is defined as follows: $\hat{\tau}$ sends the curve of
$\hat{\D}$ passing through $x_\alpha^i$ to the curve of $\hat{\D}$ passing
through $x_{\tau\alpha}^j$, where $j=\rho(a)(i)$. 
The points of both curves are related by $\hat{\tau}$ starting from $x_\alpha^i,x_{\tau\alpha}^j$
lifting the relation among the the points of $\alpha$ and $\tau\alpha$ near 
$x_\alpha$ and $x_{\tau\alpha}$ in the natural way. If we think
about the circle $\mathbb{S}^1$ as the real numbers modulo $2\pi$,
we can take parametrizations $\alpha,\tau\alpha:\mathbb{R}\to S$ of the curves 
$\alpha,\tau\alpha$ respectively such that $\alpha(0)=x_\alpha$, $\tau\alpha(0)=x_{\tau\alpha}$
and such that $\alpha(t)$ is related by $\tau$ with $\tau\alpha(t)$ for all $t\in\mathbb{R}$.
Taking lifts $\alpha^i,\tau\alpha^j:\mathbb{R}\to \hat{S}$ of these parametrizations with
$\alpha^i(0)=x^i_\alpha$, $\tau\alpha^j(0)=x^j_{\tau\alpha}$ we state that
$\alpha^i(t)$ is related by $\hat{\tau}$ with $\tau\alpha^j(t)$ for all $t\in\mathbb{R}$.
We want to prove that $(\hat{\D},\hat{\tau})$ is an abstract Johansson diagram on $\hat{S}$.
\begin{claim}\label{claim:2-hat-tau-well-defined}
The sistering $\hat{\tau}$ is well defined: if $x_{\alpha}^i,x_{\alpha}^{i'}$ are different
lifts of $x_\alpha$ lying on the same curve $\hat{\alpha}$ of $\hat{\D}$, then
$x_{\tau\alpha}^{j}$ and $x_{\tau\alpha}^{j'}$ lie on the same curve of
$\hat{\D}$, where $j=\rho(a)(i)$ and $j'=\rho(a)(i')$.
\end{claim}
\begin{proof}[Proof of Claim~\ref{claim:2-hat-tau-well-defined}]
Assume first that $x_{\alpha}^i$ and $x_{\alpha}^{i'}$ are consecutive in
$\hat{\alpha}$: we can travel along $\hat{\alpha}$ from
$x_{\alpha}^i$ to $x_{\alpha}^{i'}$ without crossing any other point of
$h^{-1}(x_{\alpha})$. Choosing appropiately the orientation of
$\alpha$ or the indices $i,i'$, we can assume that the lift of $\alpha$ starting
at $x_{\alpha}^i$ ends at $x_{\alpha}^{i'}$, and in terms of $\pi(\D)$ this
implies that $\rho(\alpha)(i)=i'$. Therefore,
\begin{align*}
\rho(\tau\alpha)(j)&=\rho(\tau\alpha)(\rho(a)(i))=\rho(a\tau\alpha)(i)\\
& =\rho(\alpha a)(i)= \rho(a)(\rho(\alpha)(i))=\rho(a)(i')=j'\,,
\end{align*}
because by the double curve relation \eqref{eq:double-curve-relator} it is
$\alpha a=a \tau\alpha$.
The points $x_{\tau\alpha}^j$ and $x_{\tau\alpha}^{j'}$ are consecutive in a lift of $\tau\alpha$ in $\hat{\D}$.

By repeating this argument, the statement is also true if
$x_{\alpha}^i$ and $x_{\alpha}^{i'}$ are not consecutive in
$\hat{\alpha}$
\end{proof}
\begin{claim}\label{claim:1-hat-tau-involutive}
The sistering $\hat{\tau}$ is an involution on $\hat{\D}$.
\end{claim}

\begin{proof}[Proof of Claim~\ref{claim:1-hat-tau-involutive}]
  We have $\tau a=a^{-1}$ in $\pi(\D)$ by dual loop relations. Then
$\rho(\tau a)=\rho(a)^{-1}$ because $\rho$ is a homomorphism.
Therefore if we have $\rho(a)(i)=j$, it is also $\rho(\tau a)(j)=i$. In other words, the
sister of the curve of $\hat{\D}$ passing through $x_\alpha^i$ is the curve of
$\hat{\D}$ passing through $x_{\tau\alpha}^j$, and the sister of the curve of
$\hat{\D}$ passing through $x_{\tau\alpha}^j$ is the curve of $\hat{\D}$ passing
through $x_{\alpha}^i$.
\end{proof}

The sistering $\hat{\tau}$ defines an equivalence relation into the points of
$\hat{\D}$. 
By Claims~\ref{claim:2-hat-tau-well-defined} 
and~\ref{claim:1-hat-tau-involutive}, each point of $\hat{\D}$ which is not a 
crossing has exactly another point related with it.

\begin{claim}
  \label{claim:4-crossings-in-triples}
  The crossings of $\hat{\D}$ are related in triplets by $\hat{\tau}$.
\end{claim}

\begin{proof}[Proof of Claim~\ref{claim:4-crossings-in-triples}]
Let $P$ be a triple point of $\Sigma$, and let $P_1,P_2,P_3$ be the
three crossings of $\D$ in the triplet of $P$. We label the curves of
$\D$ intersecting at $P_1,P_2,P_3$ as in Figure~\ref{Fig:triple-point-diagram}, and we consider the paths $$\omega_1,\omega_2,\omega_3,\quad d_{\alpha},d_{\tau\alpha},d_{\beta},d_{\tau\beta},d_{\gamma},d_{\tau\gamma},\quad
t_{\alpha},t_{\tau\alpha},t_{\beta},t_{\tau\beta},t_{\gamma},t_{\tau\gamma}$$
as in the construction of the triple point relations (Figure~\ref{Fig:triple-point-diagram}).

For each $n=1,2,3$ and $i=1,2,\ldots$, let $\omega_n^i$ be the lift of $\omega_n$ starting at $x^{i}$, and let $P_n^i$ be the endpoint of $\omega_n^i$.
If we choose one of the lifts $P_1^i$ of $P_1$, it is related by $\hat{\tau}$ with one lift $P_2^j$ of $P_2$ by means of the sister curve of the lift of $\alpha$ passing through $P_1^i$.
Let label $\alpha^i$ the lift of $\alpha$ passing through $P_1^i$.
In order to find $P_2^j$ we need to find the sister curve of $\alpha^i$,
and to do this we follow $\alpha^i$ until we reach a lift of $x_\alpha$.
This is made by taking the lift of $d_\alpha$ starting at $P_1^i$,
which ends at a lift $x_\alpha^{i'}$ of $x_\alpha$.
The lift of $\lambda_\alpha$ ending at $x_\alpha^{i'}$ starts at $x^{i'}$,
where $i'=\rho(t_\alpha)(i)$.
The sister curve of $\alpha^i$ is the lift of $\tau \alpha$ 
passing through $x_{\tau\alpha}^{j'}$, where $j'=\rho(a)(i')$.
Finally, $P_2^j$ is located at the starting point of the lift of 
$d_{\tau \alpha}$ ending at $x_{\tau\alpha}^{j'}$, 
and it verifies that $j=\rho(t_{\tau\alpha}^{-1})(j')$. 
In other words, it is
$$j=\rho(t_\alpha a t_{\tau\alpha}^{-1})(i)\,.$$
By the same argument, we have that $P_2^j$ is related with $P_3^\ell$, with $\ell=\rho(t_\beta b t_{\tau\beta}^{-1})(j)$ and that $P_3^\ell$ is related with $P_1^{i^*}$, where $i^*=\rho(t_\gamma c t_{\tau\gamma}^{-1})(\ell)$.
Triple point relation~\eqref{ec:3pto_relation} at $P$ implies that $i^*=i$, and thus there is no more points related with $P_1^i$ different from $P_2^j$ and $P_3^\ell$. This construction is valid in general and so the crossings of $\D$ are related in triples by $\hat{\tau}$.
\end{proof}

We have proved that $(\hat{\D},\hat{\tau})$ is an abstract Johansson diagram in 
$\hat{S}$: points in $\hat{\D}$ are identified in pairs with the exception of 
crossings, which are identified in triplets. Moreover, this equivalence 
relation is compatible with $f$, that is, if $\hat{Y},\hat{Z}\in\hat{S}$ are 
related by $\hat{\tau}$, their projections $h(\hat{Y}),h(\hat{Z})\in S$ are 
related by $\tau$. The quotient space $\hat{\Sigma}$ of $\hat{S}$ under the 
equivalence relation defined by $\hat{\tau}$ is a (pseudo) Dehn surface and we 
can define a map $\hat{h}:\hat{\Sigma}\to\Sigma$ making the following diagram commutative:
\[
  \xymatrix{
    \hat{S} \ar[r]^{\hat{f}} \ar[d]_{h} & \hat\Sigma \ar[d]^{\hat{h}} \\
    S       \ar[r]_ {f}      & \Sigma
  }
\]
where $\hat{f}:\hat{S}\to\hat\Sigma$ is the quotient map. The map $\hat{h}$ is 
in fact an $m$-sheeted covering map whose monodromy homomorphism $\hat\rho$ 
verifies by construction $\rho=\hat\rho\circ\varepsilon$.
\end{proof}

\begin{proof}[Proof of Theorem~\ref{thm:fundamental-group-presentation}]
  Due to Cayley's Theorem ~\cite[p. 9]{Hall}, every group admits an injective 
  representation into the group of permutations of its own elements. Because 
  $\pi(\D)$ is finite or countable, the group of permutations of its elements 
  is isomorphic to $\Omega_m$, for an $m\in\NI$. Thus, we can construct a 
  faithful representation $\rho:\pi(\D)\to\Omega_m$, and by 
  Lemma~\ref{lem:representations-factorize}, this implies that $\varepsilon$ 
  must be injective.
\end{proof}

Hurewicz's Theorem implies that the first homology group of 
$\Sigma$ is isomorphic to the abelianization of its fundamental group. Let us 
see how to adapt the presentation of 
Theorem~\ref{thm:fundamental-group-presentation} to the abelian case. Following 
the notation of above:
\begin{enumerate}[\em({AR}1)]
  
  \item \emph{Dual loop relations}.\label{AR1}
   These relations remains the same, if 
    $\alpha$ and $\tau\alpha$ are sister curves, their preferred dual loops 
    satisfy $a = -\tau a$.
  
  \item \emph{Triple point relations}. The triple points relations can be 
    simplified using commutativity to 
    \[
      a + b + c = w,
    \]
    where $w$ is a word in surfacewise generators and $a$, $b$ and $c$ are 
    preferred dual loops of double curves.
  
  \item \emph{Double curve relations}. Conjugacies on commuting elements are 
    equalities, so the double curve relation can be written as 
    $\alpha = \tau\alpha$.
  
  \item \emph{Surface relations}. The generators of the fundamental group of 
    the $g$-torus $S$ are related by a product of commutators. In the abelian 
    context, this relation is trivial.
  
\end{enumerate}
Relations~AR\ref{AR1} allow us to select $k$ generators $a_1,a_2,\ldots,a_k$ from the set of preferred dual loops, dropping $k$ of them. Finally,

\begin{theorem}
  \label{thm:homology}
  The abelian group $H_1(\Sigma)$ is isomorphic to
  \[
    H_1(\D) = \langle\, s_1,s_2,\ldots,s_{2g},a_1,a_2,\ldots,a_k 
                    \mid \text{AR2 and AR3 relations} \,\rangle.
  \]
\end{theorem}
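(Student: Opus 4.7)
The plan is to deduce Theorem~\ref{thm:homology} from Theorem~\ref{thm:fundamental-group-presentation} by abelianization, using Hurewicz's theorem to identify $H_1(\Sigma)$ with the abelianization of $\pi_1(\Sigma)$. Since abelianizing a group presentation amounts to keeping the same generators and imposing that all generators commute in addition to the original relators, I would start from
\[
  \pi_1(\Sigma)\cong\langle\, s_1,\ldots,s_{2g},a_1,\ldots,a_{2k}\mid \text{R1, R2, R3, R4}\,\rangle
\]
and inspect what each family of relators becomes after abelianization, which is precisely the content of the four items AR1--AR4 listed immediately before the theorem.

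Next I would explain how AR4 disappears and how AR1 reduces the generating set. The surface relation R4 is the single relator $[s_1,s_2]\cdots[s_{2g-1},s_{2g}]=1$ coming from the presentation of $\pi_1$ of the closed $g$-torus $S$; once every pair of generators commutes, this relator is automatically trivial, so AR4 contributes nothing. The dual-loop relations AR1 read $\tau a_i=-a_i$ in additive notation, so for each pair of sister curves $\alpha_i,\tau\alpha_i$ one of the two dual generators can be eliminated by a Tietze transformation, leaving exactly $k$ independent dual loop generators $a_1,\ldots,a_k$. After these Tietze transformations the triple point relators R2 simplify (in every triple point relator the conjugating words $t_\alpha a t_{\tau\alpha}^{-1}$ collapse to $a+w_\alpha$ for a surfacewise word $w_\alpha$), yielding AR2; similarly the double curve relators R3 become $\alpha = \tau\alpha$, yielding AR3.

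The conclusion is then that the group
\[
  H_1(\D)=\langle\, s_1,\ldots,s_{2g},a_1,\ldots,a_k\mid \text{AR2, AR3}\,\rangle
\]
is Tietze-equivalent to the abelianization of the presentation of $\pi_1(\Sigma)$ from Theorem~\ref{thm:fundamental-group-presentation}, and by Hurewicz's theorem this abelianization is $H_1(\Sigma)$.

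I do not expect any serious obstacle here: the only point that requires a little care is the bookkeeping when eliminating half of the dual generators via AR1 and rewriting the triple point and double curve relators accordingly, making sure that no relator is lost and that the remaining ones match AR2 and AR3 exactly. The triviality of AR4 and the application of Hurewicz are standard.
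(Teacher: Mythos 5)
Your proposal matches the paper's treatment: the paper gives no separate proof of Theorem~\ref{thm:homology}, since the result is taken as immediate from Hurewicz's theorem applied to the presentation in Theorem~\ref{thm:fundamental-group-presentation}, together with the preceding discussion in which R1--R4 are abelianized to AR1--AR4 and the AR1 relations are used to drop $k$ of the $2k$ dual-loop generators. Your account, including the observation that AR4 becomes trivial and the Tietze elimination via AR1, is exactly the intended argument.
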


\begin{remark}\label{rmk:homology-M-Sigma}
  If $\Sigma$ is filling, it is the $2$-skeleton of $M$ as a CW-complex, so 
  $\pi_1(\Sigma)=\pi_1(M)$. Hence $H_1(\Sigma)=H_1(M)$, which is isomorphic to 
  the second homology group due to Poincar\'e duality. In this way, 
  Theorem~\ref{thm:homology} characterizes completely the homology of the 
  closed orientable manifold $M$.
\end{remark}

  \section{Checkerboards and homology spheres}\label{sec:checkers-homology-spheres}
  
  Consider a closed orientable $3$-manifold $M$ and a Dehn surface 
  $S\to\Sigma\subset M$. We say \emph{$M$ is checkered by $\Sigma$} if 
  we can assign a 
  color from $\{0,1\}$ to each region of $\Sigma$ such that if two of them 
  share a face they have different colors. In the same way, $S$ is checkered by 
  the Johansson diagram $\D$ of $\S$ if we can assign a color from $\{0,1\}$ to 
  each face of $S-\D$ such that if two of them share an edge they have 
  different colors. Checkerboards are a usual tool in knot theory 
  (see \cite{Carter-Kamada-Saito}, for example).
    
  \begin{lemma}
    \label{lem:checkered_paths}
    The manifold $M$ is checkered by $\Sigma$ if and only if any loop 
    on $M$ meeting $\Sigma$ transversely on the faces intersects $\S$ in an even 
    number of points.
  \end{lemma}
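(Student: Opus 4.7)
The plan is to prove both implications by thinking of the intersection number with $\Sigma$ modulo $2$ as a cocycle on paths that detects the change of color. For the ``only if'' direction I would argue as follows. Assume $M$ is checkered by $\Sigma$ and let $\gamma$ be a loop meeting $\Sigma$ transversely only on faces. As $\gamma$ traverses its image in $M$, each transverse intersection with a face separates two adjacent regions (regions sharing that face). By the checker hypothesis these two regions have opposite colors, so every intersection changes the color of the region containing the moving point. Since $\gamma$ is a loop, the region containing $\gamma(0)=\gamma(1)$ is the same, so the total number of color changes, which equals the number of intersection points, must be even.

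For the ``if'' direction I would construct the coloring. Fix a region $R_{0}$ and a base point $x_{0}\in R_{0}$. Given any other region $R$, pick $x\in R$ and a smooth path $\gamma$ from $x_{0}$ to $x$; by a standard transversality perturbation (using that the set of double and triple points of $\Sigma$ is a $1$-complex in $M$) we may assume $\gamma$ meets $\Sigma$ transversely only on faces, and the number of such meeting points is finite. Define $c(R)\in\{0,1\}$ to be the parity of this number. Well-definedness is the key point: if $\gamma'$ is another admissible path from $x_{0}$ to $x$, then the concatenation $\gamma\cdot (\gamma')^{-1}$ is a loop meeting $\Sigma$ transversely only on faces, so by hypothesis its number of intersections is even, hence $\gamma$ and $\gamma'$ give the same parity. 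Checking the coloring property is immediate: two regions sharing a face $F$ can be connected by a short path that crosses $F$ once and meets $\Sigma$ nowhere else, so their colors differ by $1$.

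The main obstacle is actually the transversality/independence-of-choices step in the backward direction: one must know that (a) every region-to-region path can be perturbed into one that avoids the $1$-skeleton $S(\Sigma)$ of $\Sigma$ while meeting $\Sigma$ transversely, and (b) two such perturbations of the same path yield the same parity. Both are handled by noting that $S(\Sigma)$ has codimension $2$ in $M$, so generic paths miss it and generic homotopies between two paths give a $2$-dimensional family whose singular events are isolated transverse crossings of faces from both sides, which cancel in pairs modulo $2$. Once well-definedness is established, the forward argument already sketched shows the coloring really is a checker coloring, completing the equivalence.
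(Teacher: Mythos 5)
Your proof is correct, and it takes a more hands-on route than the paper's. The paper packages the argument graph-theoretically: it forms the dual graph $G_\Sigma$ whose vertices are the regions of $\Sigma$ and whose edges record shared faces, observes that ``$M$ is checkered by $\Sigma$'' is exactly ``$G_\Sigma$ is $2$-colorable'', translates a transverse loop $\gamma$ into a closed walk $c_\gamma$ in $G_\Sigma$ whose edge count is $|\gamma\cap\Sigma|$, and then invokes the standard fact that a graph is $2$-colorable iff bipartite iff every closed walk has even length. Your direct argument is in essence the proof of that graph fact carried out inside $M$: the ``only if'' direction is the same parity count, and the ``if'' direction is the usual construction of a $2$-coloring via parity of distance from a base vertex, realized here as parity of intersection number with $\Sigma$ along paths from a fixed base region. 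What the paper's abstraction buys is conciseness and the ability to cite a textbook graph fact; what your direct version buys is that you never need to check (as the paper implicitly must) that every cycle in $G_\Sigma$ is actually realized by some transverse loop in $M$, since you build the coloring straight from the intersection data. You also correctly flag the transversality and choice-independence issues in the backward direction, which the paper glosses over; the codimension-$2$ observation about $S(\Sigma)$ is exactly what is needed there. Both proofs are sound.
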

  
  \begin{proof}
    Given the decomposition of $M$ in regions of $\S$, we can build a graph $G_\Sigma$ 
    describing it combinatorially. Each vertex corresponds to a 
    region and there is an edge connecting two vertices if the corresponding 
    regions share a face. It is clear that $M$ is checkered by $\Sigma$ if and only if
    $G_\Sigma$ is $2$-colourable.
    
    Each path $\gamma$ in $M$ meeting $\Sigma$ transversely on the faces has an associated
    path $c_\gamma$ in $G_\Sigma$ that encodes the sequence of ``crossings'' of $\gamma$ between
    different regions of $\Sigma$ across the faces of $\Sigma$. The number of points of $\gamma\cap\Sigma$ is just the 
    number of edges of $c_\gamma$, and if $\gamma$ is closed, $c_\gamma$ is closed too.
        
    As a graph is $2$-colorable if and only if it is bipartite, the statement 
    of the lemma is equivalent to: $G_\Sigma$ is bipartite if and only if all 
    loops have an even number of edges, which is trivially true.
  \end{proof}
  
  \begin{proposition}
    \label{prop:checkered}
    A $\Z/2$-homology $3$-sphere is checkered by any 
    Dehn surface.
  \end{proposition}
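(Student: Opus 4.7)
My strategy is to apply Lemma~\ref{lem:checkered_paths} and reduce to showing that for every loop $\gamma$ in $M$ meeting $\Sigma$ transversely on the faces, the cardinality $\#(\gamma\cap\Sigma)$ is even. The tool will be mod-$2$ intersection theory against the immersed surface $\Sigma$.

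The first observation I would use is that if $\gamma$ meets $\Sigma$ only in interior points of faces, then for any parametrization $f\colon S\to M$ of $\Sigma$ every intersection point has a unique preimage, so $\#(\gamma\cap\Sigma)$ equals the transverse geometric intersection count of $\gamma$ with the immersion $f$. Reduced modulo $2$, this is exactly the value of the intersection pairing
\[
  H_1(M;\Z/2)\times H_2(M;\Z/2)\longrightarrow \Z/2
\]
applied to $([\gamma],f_*[S])$. Since $M$ is a $\Z/2$-homology $3$-sphere, $H_2(M;\Z/2)=0$ (equivalently, $H_1(M;\Z/2)=0$ by Poincar\'e duality), so $f_*[S]=0$ and the pairing vanishes identically. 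Lemma~\ref{lem:checkered_paths} then delivers the checkering.

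The main point that requires care is justifying that the geometric count modulo $2$ really is a homological invariant, despite $f$ being only an immersion and $\Sigma$ having double curves and triple points. My plan is to argue this directly by generic position: since $\gamma$ is homologically trivial mod $2$, it bounds a singular $\Z/2$-chain in $M$ realized by a map $H\colon F\to M$ from some compact surface with $\partial F=\gamma$. Because the singular set $S(\Sigma)$ has codimension $2$ in $M$, a generic such $H$ can be taken to avoid $S(\Sigma)$ entirely and to be transverse to the smooth sheets of $f$; consequently $H^{-1}(f(S))$ is a compact properly embedded $1$-manifold in $F$ whose boundary is exactly the preimage of $\gamma\cap\Sigma$. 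Since any compact $1$-manifold has an even number of boundary points, $\#(\gamma\cap\Sigma)$ is even.

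I expect the genuine obstacle to be purely technical, namely the transversality/general-position statement for $H$ with respect to an immersion whose image is singular; but this is standard once one localises near the double-curve and triple-point normal forms of Section~\ref{sec:Dehn-surfaces-Johansson-diagrams}, and it is the only nontrivial ingredient beyond the homological input $H_2(M;\Z/2)=0$.
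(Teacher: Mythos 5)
Your proof is correct and follows essentially the same approach as the paper: reduce via Lemma~\ref{lem:checkered_paths} to showing that every transverse loop meets $\Sigma$ an even number of times, then observe that this follows from mod-$2$ intersection theory because the relevant classes vanish in a $\Z/2$-homology sphere. The paper states this more tersely (simply noting that both $\gamma$ and $\Sigma$ are $\Z/2$-nullhomologous so the intersection $0$-cycle is nullhomologous), whereas you spell out the bounding-chain and transversality argument in detail, but the underlying idea is identical.
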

  
  \begin{proof}

%
    Let $M$ be a $\Z/2$-homology $3$-sphere and let $\S$ be a Dehn surface in 
    $M$. Let $\gamma$ be a loop in $M$ intersecting $\Sigma$ transversely at 
    the faces. Both $\gamma$ and $\Sigma$ are $\Z/2$-nullhomologous. 
    Therefore the set of intersection points of $\gamma$ and $\Sigma$ must be a 
    $\Z/2$-nulhomologous $0$-cycle, and then, the number of intersection points 
    should be even. Lemma~\ref{lem:checkered_paths} completes the proof.
  \end{proof}

  If the surface $S$ is orientable, we can fix a well defined normal field on 
  $\Sigma$ in $M$. We assign to each face of $\Sigma$ the color of 
  the region pointed by the normal vector field on the face. The faces of $S$ 
  sharing an edge cannot share also the color, as the corresponding regions 
  share a face (see Figure~\ref{fig:checkers}). Therefore:
  \begin{figure}
    \centering\includegraphics{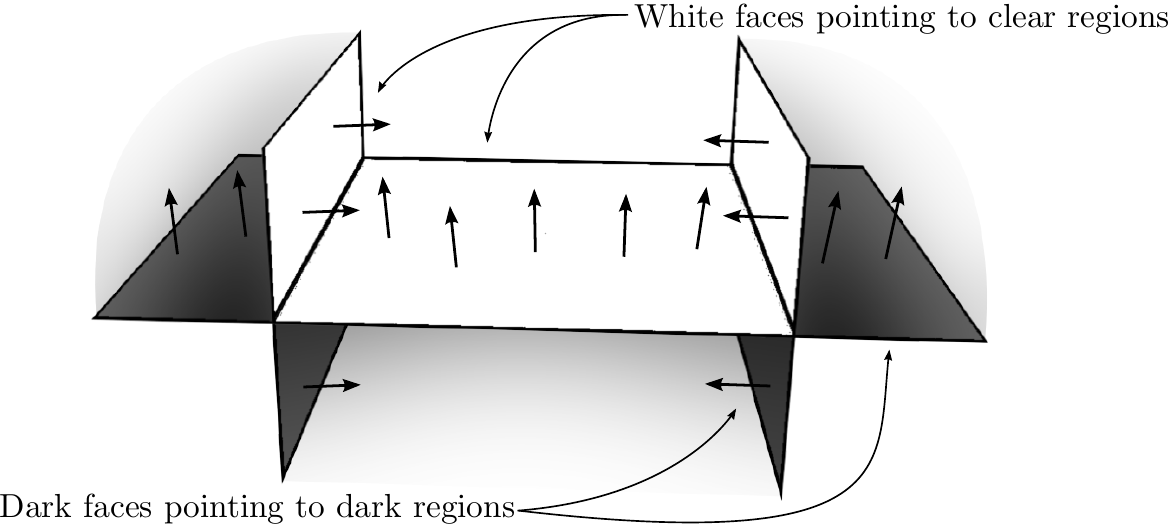}

    \caption{Coloring faces with region colors.}
    \label{fig:checkers}
  \end{figure}
  
  \begin{lemma}\label{lem:orientable-checkered-surface-checkered-diagram}
    If $M$ and $S$ are orientable and $M$ is checkered by $\Sigma$, the 
    surface $S$ is also checkered by $\D$.
  \end{lemma}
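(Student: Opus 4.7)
The plan is to construct an explicit $2$-coloring of the faces of $S - \D$ by pulling back the checkerboard of $M$ along a preferred normal field to $\Sigma$. First, using the orientations of $S$ and $M$ together with the fact that $f$ restricted to any face $F$ of $S - \D$ is an embedding (since simple points of $\Sigma$ have a unique preimage), I would define a canonical continuous unit normal $N_F$ along $f(F)$ in $M$: namely the one for which the orientation of $F$ followed by $N_F$ matches the orientation of $M$. I would then color each face $F$ with the color that the checkerboard of $M$ assigns to the region of $M - \Sigma$ into which $N_F$ points. This assignment is well-defined because $F$ is connected and $N_F$ is continuous, so $N_F$ lands in a single region throughout $f(F)$.

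The key verification is that two faces $F_1, F_2$ of $S - \D$ sharing an edge $e \subset \D$ receive opposite colors. I would argue locally at a non-crossing point $p \in e$: pick a small ball $B$ about $f(p)$ in $M$ such that $B \cap \Sigma$ is the union of two transverse disks, namely the sheet $A = f(\text{nbhd of } e \text{ in } S)$, which contains $f(F_1)$ and $f(F_2)$ on opposite sides of $f(e)$, together with a second sheet $A'$. These two sheets cut $B$ into four local cells, each lying in a single region of $M - \Sigma$. Since $F_1$ and $F_2$ are adjacent across $e$ in the oriented surface $S$, the fields $N_{F_1}$ and $N_{F_2}$ extend to a continuous normal to $A$ across $f(e)$, hence point to the same side of $A$ near $f(p)$. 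On that side of $A$, however, $A'$ separates the local cell containing $f(F_1)$ from the one containing $f(F_2)$, so $N_{F_1}$ and $N_{F_2}$ point into two distinct regions of $M - \Sigma$ that share a face lying on $A'$. The checkerboard hypothesis on $M$ then forces those two regions to carry different colors, and therefore so do $F_1$ and $F_2$.

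The only delicate point is the local picture at $e$: one must check that crossing $e$ within the sheet $A$ moves the tip of the normal across $A'$ into a neighboring region. This is an immediate consequence of the transverse intersection of the two sheets along the double curve, and I do not anticipate it posing a real obstacle; the rest of the argument is bookkeeping about orientations and connected components.
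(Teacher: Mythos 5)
Your proposal is correct and takes essentially the same approach as the paper: use the orientations of $S$ and $M$ to fix a global normal field on $\Sigma$, color each face by the color of the region the normal points into, and observe that adjacent faces receive different colors because the corresponding regions share a face lying on the second sheet through the double curve. The paper states this in three sentences and defers the local picture to a figure, whereas you spell out the transversality argument at the double curve explicitly; the content is identical.
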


  The converse of this lemma does not hold. The 2-sphere $\mathbb{S}^2$ is 
  checkered by the diagram of Figure~\ref{Fig:audi}, but 
  $\mathbb{S}^2\times \mathbb{S}^1$ is not checkered by the corresponding 
  filling Dehn sphere. The existence of checkerboards has strong 
  implications on the number of triple points.
  
  \begin{proposition}
    \label{prop:checkered-even}
    If $M$ is checkered by $\Sigma$, the number of triple points of $\Sigma$ is 
    even.
  \end{proposition}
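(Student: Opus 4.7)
Plan: I would use a double-counting argument based on Euler characteristics, assuming $\Sigma$ fills $M$ (the case of primary interest in the paper). The key local observation is that the checker condition forces a very specific bipartite pattern at the singularities of $\Sigma$: at each triple point, the $8$ surrounding octants, under the face-adjacency relation, form the cube graph, whose unique bipartition gives $4$ color-$1$ and $4$ color-$0$ octants; similarly, around each double-curve edge, the $4$ quadrants alternate in color, giving $2$ of each.

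For each color-$1$ region $R$, its closure $\bar R$ is a $3$-ball by the filling hypothesis, so $\chi(\bar R) = 1$. The CW structure on $\bar R$ induced by $\Sigma$ yields $V(R) - E(R) + F(R) - 1 = 1$, where $V(R)$, $E(R)$, $F(R)$ count triple points, edges, and faces of $\Sigma$ on $\partial R$ with multiplicity equal to the number of $R$-incidences at each. Summing over color-$1$ regions and interchanging the order of summation, the local checker counts yield $\sum_R V(R) = 4T$, $\sum_R E(R) = 2 \cdot 3T = 6T$, and $\sum_R F(R) = F$ (since each face of $\Sigma$ borders exactly one color-$1$ region). From $\chi(S) = 2 - 2g$ applied to the cell decomposition of $S$ induced by $\D$, I derive $F = 3T + 2 - 2g$, so
\[
2 R_1 = 4T - 6T + (3T + 2 - 2g) = T + 2 - 2g,
\]
whence $T = 2(R_1 + g - 1)$ is even.

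The main obstacle is verifying the local checker structure at the singularities; once the bipartite pattern at triple points and the alternating pattern at double-curve edges are established, the global calculation is routine. A subtlety is the filling hypothesis, used to assert $\chi(\bar R) = 1$: for non-filling $\Sigma$, one would use $\chi(\bar R) = \chi(\partial \bar R)/2$ for compact orientable $3$-manifolds with boundary and adapt the count, but the same Euler-characteristic bookkeeping should still go through.
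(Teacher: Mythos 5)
Your argument is correct, and it follows a genuinely different route from the paper's. The paper first descends to the domain $S$ via Lemma~\ref{lem:orientable-checkered-surface-checkered-diagram} and then works entirely in the Johansson diagram: for each curve of $\D$ it takes a parallel ``neighbouring'' curve, uses the checkering of $S$ to see (by the argument of Lemma~\ref{lem:checkered_paths}) that each neighbouring curve meets $\D$ in an even number of points, and deduces from a double count of these intersection points near crossings that the number of crossings is even, hence so is the number of triple points. You instead stay in $M$ and run an Euler-characteristic double count over the colour-$1$ regions, using the forced local colourings (cube-graph bipartition at each triple point, alternating $4$-cycle at each edge, one side per colour at each face) together with $\chi(S)=2-2g$; this yields the sharper identity $T = 2(R_1 + g - 1)$, which gives the parity for free and, since $R_1\geq 1$, also recovers Theorem~\ref{thm:spectrum_bound} in one stroke. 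The main caveat is that your clean version assumes $\Sigma$ is filling (so the regions are $3$-cells and all edges are arcs), whereas Proposition~\ref{prop:checkered-even} makes no such hypothesis and the paper's diagrammatic argument does not need it. Your sketch of the general case does work --- replace $\chi(\bar R)=1$ by the parity observation that $\sum_R\chi(\partial\hat R)=2\sum_R\chi(\hat R)$ is automatically even for the unfolded closures $\hat R$, and allow for double curves without triple points in the edge count --- but you should read $V(R),E(R),F(R)$ off the induced cell structure on $\partial\hat R$ (equivalently, on the attaching sphere $S^2$ in the filling case), not off the possibly non-manifold closure $\bar R\subset M$.
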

  
  \begin{proof}
    For each curve of $\D$, consider a \emph{neighbouring curve} running parallel to it and intersecting $\D$ transversely near the crossings of $\D$
    (see~\cite[Fig. 26]{RHomotopies}). As $S$ is 
    checkered by $\D$, by a similar argument to that of the proof of 
    Lemma~\ref{lem:checkered_paths} each neighbouring curve must intersect $\D$ 
    in an even number of points.
    Because the number of curves in $\D$ is even, the total amount of intersection 
    points between neighbouring curves and diagram curves is a multiple of $4$.
    Each crossing of the diagram 
    corresponds to two of those intersections, so the number of crossings is 
    even. Finally, the number of triple points is also even, as it is the 
    number of crossings over three.
  \end{proof}
  
  From Remark~\ref{rmk:at-least-1+2g-triple-points} and Proposition~\ref{prop:checkered-even} we get:
  \begin{theorem}
    \label{thm:spectrum_bound}
    Let $\Sigma$ be a filling Dehn $g$-torus
    of $M$. If $M$ is checkered by $\Sigma$, the number of 
    triple points of $\S$ is greater than or equal to $2g$. \qed
    \end{theorem}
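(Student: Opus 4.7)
The plan is to combine the two cited results via a short parity argument. First I would apply Remark~\ref{rmk:at-least-1+2g-triple-points}, which asserts that any filling Dehn $g$-torus has at least $2g-1$ triple points. Letting $q$ be the number of triple points of $\Sigma$, this gives $q \geq 2g-1$. This bound is one short of the desired $2g$, so an additional constraint on $q$ is needed to upgrade the inequality.

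That extra constraint is exactly supplied by Proposition~\ref{prop:checkered-even}: since $M$ is checkered by $\Sigma$ by hypothesis, $q$ is even. For $g \geq 1$ the integer $2g-1$ is odd, so the smallest even integer that is $\geq 2g-1$ is $2g$, giving $q \geq 2g$. For $g=0$ the claim reduces to $q \geq 0$, which is automatic. There is essentially no obstacle; the theorem is simply the parity upgrade of the two quoted facts, so the proof should be only a couple of lines.
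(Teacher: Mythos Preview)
Your proposal is correct and matches the paper's own argument exactly: the theorem is stated immediately after ``From Remark~\ref{rmk:at-least-1+2g-triple-points} and Proposition~\ref{prop:checkered-even} we get:'' and carries a \qed with no further proof, so the intended reasoning is precisely the parity upgrade you describe.
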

    
    All these results allows us to give a sharper lower bound for the 
    triple point numbers of $\Z/2$-homology $3$-spheres.
    
    \begin{theorem}\label{thm:spectrum-homology-sphere_bound}
    If $M$ is a $\Z/2$-homology $3$-sphere
    \begin{equation*}
      t_g(M) \geq 2 + 2g.
    \end{equation*}
  \end{theorem}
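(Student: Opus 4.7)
The plan is to combine the checkerboard results with the homology presentation. Let $\Sigma$ be any filling Dehn $g$-torus in $M$ with $q$ triple points. Since $M$ is a $\Z/2$-homology sphere, Proposition~\ref{prop:checkered} gives that $M$ is checkered by $\Sigma$; Proposition~\ref{prop:checkered-even} then gives $q$ even, and Theorem~\ref{thm:spectrum_bound} gives $q\geq 2g$. Hence the inequality $q\geq 2g+2$ can fail only if $q=2g$ exactly, which by Proposition~\ref{prop:triplets-regions} is equivalent to $r=q+(2-2g)=2$: $\Sigma$ splits $M$ into exactly two $3$-cells $R_0,R_1$, every face of $\Sigma$ lying on the common frontier.

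The heart of the proof, and the step I expect to be the main obstacle, is ruling out $r=2$ in a $\Z/2$-homology sphere. My plan is to argue via the $\Z/2$-homology presentation of Theorem~\ref{thm:homology}. When $q=2g$ that presentation has $2g+k$ generators ($s_i$ for $i=1,\ldots,2g$, plus one preferred dual loop $a_j$ per sister pair for $j=1,\ldots,k$) and exactly $2g+k$ relations (the $q=2g$ triple-point relations AR2 together with the $k$ double-curve relations AR3). By Remark~\ref{rmk:homology-M-Sigma}, $H_1(\Sigma;\Z/2)=H_1(M;\Z/2)=0$, so the resulting square relation matrix must be invertible over $\Z/2$. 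I would show this is impossible by exploiting the rigidity forced by $r=2$: the long exact sequence of the pair $(M,\Sigma)$ in $\Z/2$-homology gives $H_2(\Sigma;\Z/2)\cong(\Z/2)^{r-1}=\Z/2$, generated by the $2$-cycle $\sigma=\sum_F F$. Producing an explicit non-trivial $\Z/2$-linear combination of the AR2 and AR3 rows that vanishes — equivalently, exhibiting the structural dependency forced by the presence of $\sigma$ — is the key step and would yield the contradiction, completing the proof.
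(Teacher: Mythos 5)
Your first paragraph matches the paper's proof exactly: checkerboard (Proposition~\ref{prop:checkered}) gives $q$ even (Proposition~\ref{prop:checkered-even}) and $q\geq 2g$ (Theorem~\ref{thm:spectrum_bound}), so one must only rule out $q=2g$; and counting generators and relations in the $\Z/2$-presentation of Theorem~\ref{thm:homology} gives, when $q=2g$, a square $(2g+k)\times(2g+k)$ relation matrix that would have to be invertible if $H_1(\Sigma;\Z/2)=0$. You have also correctly identified that everything hinges on exhibiting a nontrivial $\Z/2$-linear dependency among the rows.

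But that key step is precisely where your proposal stops, and the route you sketch for it does not obviously close the gap. The $2$-cycle $\sigma=\sum_F F$ lives in the cellular chain group $C_2(\Sigma;\Z/2)$, whose rank is the number of faces of $\Sigma$, whereas the rows of your square matrix are the $q$ triple-point relators AR2 and the $k$ double-curve relators AR3 of the \emph{abstract} presentation $H_1(\D)$. There is no given chain map identifying $\ker\partial_2$ with linear combinations of those rows, so ``$H_2(\Sigma;\Z/2)\neq 0$'' does not, by itself, hand you a vanishing combination of AR2/AR3 rows; making that passage rigorous would itself be real work. Note also that the detour through $r=2$ and the long exact sequence of $(M,\Sigma)$ is unnecessary. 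The paper's proof produces the dependency directly and concretely: it looks only at the $k$ AR3 rows (which involve only the $s_i$-coordinates, each entry $q^i_\alpha$ being, via Poincar\'e duality on $S$, a $\Z/2$-intersection number of a surfacewise generator with a pair of sister diagram curves). Because $M$ is checkered by $\Sigma$ (Proposition~\ref{prop:checkered}) and $S$ is orientable, Lemma~\ref{lem:orientable-checkered-surface-checkered-diagram} gives that $S$ is checkered by $\D$, hence every $s_i$ meets $\D$ in an even number of points, hence every column of the AR3 tableau sums to $0\pmod 2$. Thus the sum of the $k$ AR3 rows is zero, the matrix has rank at most $2g+k-1$, and $H_1(\Sigma;\Z/2)\neq 0$, contradicting Remark~\ref{rmk:homology-M-Sigma}. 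That checkerboard-on-$S$ plus intersection-theory argument is the content you would need to supply, and it is genuinely different from, and more elementary than, the $H_2$ route you propose.
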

    
   \begin{proof}
    By Theorem~\ref{thm:spectrum_bound} it is enough to show that 
    the number of triple points cannot be $2g$.
    Let $\S$ be a Dehn $g$-torus in $M$ with exactly $2g$ triple 
    points. Let $S$ be the domain of $\S$.
    
    Let us compute the first homology group of $\S$ with coefficients in 
    $\Z/2$. By Theorem~\ref{thm:homology} this abelian group is 
    generated by $s_1,s_2,\ldots,s_{2g}$, the curves generating $H_1(S)$, and the
    preferred dual loops $a_1,a_2,\ldots,a_{k}$ extracted from the $k$ double curves of $\S$. 
    The relations come from the $2g$ AR2 relations and the $k$ AR3 relations by 
    just taking coefficients in $\Z/2$. With this presentation $H_1(\S;\Z/2)$ 
    has the same number of generators as of relators.
        
    Consider only the $k$ AR3 relators. If $\alpha$ and $\tau\alpha$ 
    are sister curves, as they are surfacewise curves they can be written 
    (up to homology, see \eqref{eq:double-curve-relator}) as
    \begin{align*}
      \alpha     &= \sum_{i=1}^{2g} n^i_\alpha s_i,    &
      \tau\alpha &= \sum_{i=1}^{2g} m^i_\alpha s_i,
    \end{align*}
    with $n_\alpha^i,m_\alpha^i\in\Z/2$. Therefore, the relation AR3 relation
    given by the pair $\alpha,\tau\alpha$ is
    \[
      \sum_{i=1}^{2g} q^i_\alpha s_i=0\,,
    \]
    with $q^i_\alpha=n^i_\alpha + m^i_\alpha$.
    All this information can be written as a tableau
    \[
      \begin{array}{c|cccc}
                 & s_1            & s_2            & \cdots & s_{2g} \\[.5mm]
        \hline
        \alpha_1\vphantom{\raisebox{2mm}1}
                 & q_{\alpha_1}^1 & q_{\alpha_1}^2 & \cdots & q_{\alpha_1}^{2g} 
                                                                         \\[1mm]
        \alpha_2 & q_{\alpha_2}^1 & q_{\alpha_2}^2 & \cdots & q_{\alpha_2}^{2g} 
                                                                              \\
        \vdots   & \vdots         & \vdots         & \ddots & \vdots     \\[1mm]
        \alpha_k & q_{\alpha_k}^1 & q_{\alpha_k}^2 & \cdots & q_{\alpha_k}^{2g} 
                                                                         \\[1mm]
      \end{array}
    \]
    Rows in this tableau represents double curves relators.
    Using intersection theory, it can be seen that columns count 
    intersection between the surfacewise generators and diagram curves. 
    By Proposition~\ref{prop:checkered} and Lemma~\ref{lem:orientable-checkered-surface-checkered-diagram} 
    the surface $S$ is checkered by $\D$, so the intersection number of each curve $s_i$ 
    with 
    the diagram should be even. This implies that the sum of each column is $0\pmod2$. 
    In other words, one of the AR3 relators can be written as a linear combination 
    of the rest. Therefore, the group $H_1(\S;\Z/2)$ has 
    $2g+k$ generators and at most $2g+k-1$ nontrivial relations, and it cannot be trivial. 
    By Remark~\ref{rmk:homology-M-Sigma} this implies that $\S$ cannot fill the $\Z/2$-homology $3$-sphere $M$.
  \end{proof}


  It is well known that $t_0(\mathbb{S}^3)=2$ (see~\cite{racsam,tesis}). 
  From Lemma~\ref{lem:triple-point-inequality} and
  Theorem~\ref{thm:spectrum-homology-sphere_bound}, $\mathbb{S}^3$ cannot have exceptional Dehn $g$-tori, therefore $\H(\mathbb{S}^3)=0$ and finally
  
  \begin{corollary}
    $\TT(\mathbb{S}^3)=(2,4,6,\ldots)$.\qed
  \end{corollary}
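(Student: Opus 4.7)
The plan is to combine the lower bound already established with a homological obstruction that rules out the equality case. By Proposition~\ref{prop:checkered} any Dehn surface in a $\Z/2$-homology $3$-sphere is checkered, so Theorem~\ref{thm:spectrum_bound} and Proposition~\ref{prop:checkered-even} give $t_g(M)\geq 2g$ and that the triple point count is always even. It therefore suffices to rule out the value $2g$: suppose toward a contradiction that a filling Dehn $g$-torus $\Sigma\subset M$ with exactly $2g$ triple points exists, write $S$ for its domain and $\D$ for its Johansson diagram, and let $2k$ be the number of diagram curves, grouped into $k$ sister pairs.

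I would compute $H_1(\Sigma;\Z/2)$ using Theorem~\ref{thm:homology}. It is presented by $2g+k$ generators, namely the surfacewise generators $s_1,\ldots,s_{2g}$ of $H_1(S;\Z/2)$ together with one preferred dual loop $a_j$ per sister pair (after applying AR1), and by $2g+k$ relators: the $2g$ triple point relators from AR2, one for each triple point, plus the $k$ double curve relators AR3. For $H_1(\Sigma;\Z/2)$ to vanish the full $(2g+k)\times(2g+k)$ relator matrix over $\Z/2$ would need rank $2g+k$, so the goal is to exhibit a single nontrivial linear dependence among the relators.

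The key step is to focus on the AR3 relators. Write each one, $\alpha+\tau\alpha=0$, in the basis $\{s_i\}$ to form a $k\times 2g$ matrix $Q=(q^i_\alpha)$ over $\Z/2$, where $q^i_\alpha$ is the $s_i$-coefficient of $[\alpha]+[\tau\alpha]\in H_1(S;\Z/2)$. The sum of the $i$-th column is then the $s_i$-coefficient of $\sum_{\gamma\in\D}[\gamma]\in H_1(S;\Z/2)$. By Lemma~\ref{lem:orientable-checkered-surface-checkered-diagram} the surface $S$ is checkered by $\D$, so letting $F_0$ be the sum (with $\Z/2$ coefficients) of the color-$0$ faces of $S$, every edge of $\D$ has faces of opposite colors on its two sides and belongs to exactly one diagram curve; hence
\[
  \partial F_0=\sum_{e\subset\D}e=\sum_{\gamma\in\D}\gamma\qquad\text{in }C_1(S;\Z/2).
\]
Thus $\sum_{\gamma\in\D}[\gamma]=0$ in $H_1(S;\Z/2)$, every column of $Q$ sums to zero, and the AR3 rows of the relator matrix are linearly dependent. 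The combined relator matrix therefore has rank at most $2g+k-1$, making $H_1(\Sigma;\Z/2)$ nontrivial; but by Remark~\ref{rmk:homology-M-Sigma} and the hypothesis $H_1(M;\Z/2)=0$ we have $H_1(\Sigma;\Z/2)=0$, a contradiction.

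The main obstacle I anticipate is not the algebra but the bookkeeping around the checkerboard: verifying cleanly that the $s_i$-coefficient of $[\alpha]+[\tau\alpha]$ appearing in an AR3 relator is really the same integer appearing in the expansion of $\sum_{\gamma}[\gamma]$, and that $\partial F_0$ picks up each edge of $\D$ exactly once. Once the null-homology $\sum_\gamma[\gamma]=0$ in $H_1(S;\Z/2)$ is in hand, the rank drop and the final contradiction are immediate.
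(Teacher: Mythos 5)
Your argument, as written, only establishes the lower bound $t_g(\mathbb{S}^3)\geq 2+2g$; it never addresses the upper bound, so it does not prove the equality $\TT(\mathbb{S}^3)=(2,4,6,\ldots)$. What you have re-proved is essentially Theorem~\ref{thm:spectrum-homology-sphere_bound}, which already appears in the paper just above this corollary and could simply be cited. To finish, you must also show $t_g(\mathbb{S}^3)\leq 2+2g$. The paper gets this from the known fact $t_0(\mathbb{S}^3)=2$ together with Lemma~\ref{lem:triple-point-inequality} (the handle-piping inequality $t_{g+1}(M)\leq t_g(M)+2$), which by induction gives $t_g(\mathbb{S}^3)\leq t_0(\mathbb{S}^3)+2g=2+2g$. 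Without that step, all you can conclude is $t_g(\mathbb{S}^3)\geq 2+2g$.

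On the part you did prove: your treatment of the lower bound follows the same skeleton as the paper's proof of Theorem~\ref{thm:spectrum-homology-sphere_bound} (pass to $H_1(\Sigma;\Z/2)$ via Theorem~\ref{thm:homology}, count $2g+k$ generators against $2g+k$ relators, and find a dependence among the AR3 rows). The one place you genuinely diverge is in justifying that the columns of the matrix $Q$ sum to zero. The paper invokes intersection theory and the checkerboard to say each $s_i$ meets $\D$ an even number of times; you instead exhibit $\sum_{\gamma\in\D}\gamma$ directly as the boundary $\partial F_0$ of the $\Z/2$-chain of color-$0$ faces, so that $\sum_\gamma[\gamma]=0$ in $H_1(S;\Z/2)$ by construction. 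These are two dual phrasings of the same fact (Poincar\'e duality on $S$ translates one into the other), but your version makes the null-homology explicit without appealing to intersection numbers, which is a cleaner way to see the rank drop. That said, cleaning up a step of the already-proved lower bound does not repair the missing upper bound, which is the actual gap.
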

  
\bibliographystyle{amsplain}

\end{document}